\renewcommand{\leq}{\leqslant}
\renewcommand{\geq}{\geqslant}
\theoremstyle{plain}
\newtheorem{thm}{Theorem}[section]
\newtheorem*{thm*}{Theorem}
\newtheorem{prop}[thm]{Proposition}
\newtheorem*{prop*}{Proposition}
\newtheorem{lem}[thm]{Lemma}
\newtheorem{cor}[thm]{Corollary}
\theoremstyle{definition}
\newtheorem{defn}[thm]{Definition}
\theoremstyle{remark}
\newtheorem{rem}[thm]{Remark}
\newtheorem{example}[thm]{Example}
\newcommand{\R}{\mathbb{R}}
\newcommand{\G}{\sigma\ell\mathbb{G}}
\newcommand{\Gu}{\sigma\ell\mathbb{G}_u}
\newcommand{\RS}{\sigma\mathbb{RS}}
\newcommand{\RSu}{\sigma\mathbb{RS}_u}
\newcommand{\VG}{\mathcal{V}_{\sigma\ell\mathbb{G}}}
\newcommand{\VGu}{\mathcal{V}_{\sigma\ell\mathbb{G}_u}}
\newcommand{\VRS}{\mathcal{V}_{\sigma\mathbb{RS}}}
\newcommand{\VRSu}{\mathcal{V}_{\sigma\mathbb{RS}_u}}
\title{Dedekind $\sigma$-complete $\ell$-groups and Riesz spaces as varieties}
\author[Marco Abbadini]{Marco Abbadini}
\email{marco.abbadini@unimi.it}
\address{Dipartimento di Matematica {\sl Federigo Enriques}, Universit\`a degli Studi di Milano, via Cesare Saldini 50, 20133 Milano, Italy.}
\thanks{2010 {\it Mathematics Subject Classification.
}
Primary: 06D20. Secondary: 03C05, 08A65.}
\keywords{Riesz space; vector lattice; lattice-ordered group; $\sigma$-completeness; equational classes; infinitary variety; axiomatization.}
\begin{document}
\maketitle
\begin{abstract}We prove that the category of Dedekind $\sigma$-complete Riesz spaces is an infinitary variety, and we provide an explicit equational axiomatization. In fact, we show that finitely many axioms suffice over the usual equational axiomatization of Riesz spaces. Our main result is that  $\R$, regarded as a Dedekind $\sigma$-complete Riesz space, generates this category as a quasi-variety, and therefore as a variety. Analogous results are established for  the categories of (i) Dedekind $\sigma$-complete Riesz spaces with a  weak order unit, (ii)  Dedekind $\sigma$-complete lattice-ordered groups, and (iii) Dedekind $\sigma$-complete lattice-ordered groups with a  weak order unit.
\end{abstract}

\section{Introduction}\label{S. intro}
Riesz spaces --- also known as vector lattices --- and the more general lattice-ordered groups are of importance both in analysis (see e.g.\ \cite{Aliprantis_Border2006, Aliprantis_Burkinshaw}) and in algebra (see e.g.\ \cite{BKW}). In connections with integration and measure theory the conditionally countably complete Riesz spaces, known as Dedekind $\sigma$-complete Riesz spaces, are particularly relevant. We recall that a Riesz space $G$ is \emph{Dedekind $\sigma$-complete} if for all countable subsets $S\subseteq G$, if $S$ admits an upper bound in $G$, then $S$  admits a least upper bound. Let us write $\RS$ for the category whose objects are such Riesz spaces and whose morphisms are the Riesz morphisms (=vector space and lattice homomorphisms) that preserve existing countable suprema. While Riesz spaces and their morphisms  form a variety of algebras in the sense of Birkhoff (see e.g.\ \cite{Burris_Sanka}), to our knowledge no equational presentation of the category $\RS$ is available in the literature. Our first result is an explicit axiomatization of $\RS$ as an infinitary variety  (Theorem \ref{t. T and E are inverse functors}). Specifically, $\RS$ can be presented as an equationally definable class of algebraic structures with the same primitive operations as Riesz spaces, and one additional operation of countably infinite arity which we write as $\bigvee\limits^-$. Semantically, $\bigvee\limits^-(g,f_1,f_2,\dots)$ is interpreted in a Dedekind $\sigma$-complete Riesz space as $\sup_{n\geq 1 }\{f_n\land g \}$. We prove that, in addition to the usual Riesz space equational axioms, finitely many equations suffice to axiomatize  $\RS$  in this language, and these express very basic properties of $\sup_{n\geq 1  }\{f_n\land g \}$. 
The set $\R$ is a fundamental example both of Riesz space and of  Dedekind $\sigma$-complete Riesz space. It is well known that $\R$ generates the (finitary) variety of Riesz spaces, see \cite[Chapter XV]{Birkhoff}. Our main result shows that, analogously, $\R$ generates the infinitary variety of  Dedekind $\sigma$-complete Riesz spaces (Theorem \ref{t. generation r}). In fact, in Corollary \ref{c. generation as quasi-variety} we obtain the stronger result that  $\R$ generates this variety as a quasi-variety, too.

Additionally, for each of the results mentioned above we prove an analogous counterpart for the category $\RSu$ whose objects are  Dedekind $\sigma$-complete Riesz spaces with a designated weak (order) unit, where  the morphisms are the Riesz morphisms that preserve both existing countable suprema and  the weak unit. Finally, we obtain analogous results for corresponding categories of lattice-ordered groups, henceforth shortened to \emph{$\ell$-groups}. For this, we will consider the category $\G$ whose objects are Dedekind $\sigma$-complete $\ell$-groups, and the category  $\Gu$ whose objects are Dedekind $\sigma$-complete $\ell$-groups with a designated weak unit.

We assume familiarity with the basic theory of $\ell$-groups and Riesz spaces. All needed background can be found, for example, in the standard references \cite{BKW, LuxZaan}.

\subsection*{Acknowledgements} The author would like to thank prof. Gerard J.H.M. Buskes for his help with the literature related to Theorem \ref{t. Nepalese}. Some of the results in this paper were originally obtained by the author as part of his M.Sc.\ Thesis, written under the supervision of prof. V. Marra at the University of Milan, Italy.

\section{Dedekind $\sigma$-complete $\ell$-groups are a variety}\label{S. l-gr}

\subsection{Definition of $\G$ and $\VG$}\label{s. V is sigma}
	\begin{defn}
		A morphism of $\ell$-groups (or $\ell$-morphism) $\varphi\colon G\to H$ is said to be \emph{$\sigma$-continuous} if $\varphi$ preserves the existing countable suprema.
	\end{defn}
	\begin{defn}
		We denote by $\G$ the category whose objects are Dedekind $\sigma$-complete $\ell$-groups and whose arrows are $\sigma$-continuous $\ell$-morphisms.
	\end{defn}
	
	It is well-known that a Dedekind $\sigma$-complete $\ell$-group is archimedean (if $a\geq 0$ and $na\leq b$ for every $n\geq 1$, then $a=0$) and thus abelian.
	
	\begin{defn}\label{d. V}
		Let $\VG$ be the (infinitary) variety described in the following.
		\begin{description}
			\item[{\it Operations of $\VG$}] operations of $\ell$-groups and an operation $\bigvee\limits^-$ of countably infinite arity. The intended interpretation of $\bigvee\limits^-(g,f_1,f_2,\dots)$ in a Dedekind $\sigma$-complete $\ell$-group is $\sup_{n\geq 1  }\{f_n\land g \}$, and we adopt the notation
			$$\bigvee\limits_{n\geq 1  }^g f_n\coloneqq \bigvee\limits^-\left(g,f_1,f_2,\dots\right)$$
			\item[{\it Axioms of $\VG$}] The axioms of $\VG$ are the axioms of $\ell$-groups and the following ones
			(which are seen to be equational, once we rewrite every inequality $a\leq b$ as $a\land b=a$)
			\begin{enumerate}
				\item[(A1)] $\bigvee\limits_{n \geq 1  }^g f_n=\bigvee\limits_{n \geq 1  }^g (f_n\land g)$;
				\item[(A2)]  $\bigvee\limits_{n\geq 1 }^g f_n=(f_1\land g)\lor\left(\bigvee\limits_{n\geq 2 }^g f_n\right)$;\\
				\item[(A3)] $\bigvee\limits_{n \geq 1  }^g (f_n\land h)\leq h$.
			\end{enumerate}
		\end{description}
	\end{defn}

	\begin{lem}\label{l. A2 more natural}
		In $G\in\VG$, for every $k\geq 1$, we have
		$$f_k\land g\leq \bigvee\limits_{n \geq 1  }^g f_n.$$
	\end{lem}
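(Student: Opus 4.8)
The plan is to prove the inequality by induction on $k$, using only axiom (A2) together with the elementary lattice fact that a join dominates each of its arguments; axioms (A1) and (A3) will not be needed.

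For the base case $k=1$, axiom (A2) gives
$$\bigvee\limits_{n\geq 1}^g f_n=(f_1\land g)\lor\left(\bigvee\limits_{n\geq 2}^g f_n\right)\geq f_1\land g.$$
For the inductive step, assume the statement holds for $k$ and let us prove it for $k+1$. The key observation is that (A2) is a universally quantified identity of $\VG$, hence it may be instantiated at any sequence; in particular, applying the induction hypothesis to the shifted sequence $(f_2,f_3,\dots)$ yields $f_{k+1}\land g\leq \bigvee\limits_{n\geq 2}^g f_n$. On the other hand, (A2) applied to the original sequence gives
$$\bigvee\limits_{n\geq 1}^g f_n=(f_1\land g)\lor\left(\bigvee\limits_{n\geq 2}^g f_n\right)\geq \bigvee\limits_{n\geq 2}^g f_n.$$
Chaining these two inequalities gives $f_{k+1}\land g\leq \bigvee\limits_{n\geq 1}^g f_n$, which completes the induction.

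Conceptually the argument is just a finite iteration of (A2) that peels off the first $k-1$ terms of the sequence, so there is no substantial obstacle. The only point that deserves a moment's care is the legitimacy of substituting the shifted sequence $(f_2,f_3,\dots)$ into (A2) (equivalently, into the induction hypothesis): this is justified because the axioms of $\VG$ are schemata valid for every choice of the variables $g,f_1,f_2,\dots$ in the underlying $\ell$-group.
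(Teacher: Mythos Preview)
Your proof is correct and follows essentially the same approach as the paper: both argue by induction on $k$ using only axiom (A2). The paper phrases the induction slightly differently, first establishing the expanded identity
\[
\bigvee_{n\geq 1}^g f_n=(f_1\land g)\lor\dots\lor(f_k\land g)\lor\Bigl(\bigvee_{n\geq k+1}^g f_n\Bigr)
\]
and then reading off the inequality, whereas you apply the induction hypothesis to the shifted sequence; but this is a cosmetic rearrangement of the same argument.
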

	\begin{proof}
		By induction on $k\geq 1$ and applying (A2)   we obtain
			$$\bigvee\limits_{n\geq 1 }^g f_n=(f_1\land g)\lor\dots\lor (f_k\land g) \lor\left(\bigvee\limits_{n\geq k+1 }^g f_{n}\right).$$
		Thus $f_k\land g\leq (f_1\land g)\lor\dots\lor (f_k\land g) \lor\left(\bigvee\limits_{n\geq k+ 1 }^g f_{n}\right) = \bigvee\limits_{n\geq 1 }^g f_n$.
	\end{proof}
	
\subsection{The categories $\G$ and $\VG$ are isomorphic}

	\begin{prop}\label{supinf}
		Let $G\in \VG$. In $G$ we have
		$$\bigvee\limits_{n \geq 1  }^g f_n=\sup_{n \geq 1  }\{f_n\land g\}.$$
	\end{prop}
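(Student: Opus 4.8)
The plan is to verify the two defining properties of a supremum separately: that $\bigvee\limits_{n\geq 1}^g f_n$ is an upper bound of the set $\{f_n\land g : n\geq 1\}$, and that it is the least such upper bound. Together these say precisely that $\sup_{n\geq 1}\{f_n\land g\}$ exists and equals $\bigvee\limits_{n\geq 1}^g f_n$, which is the claim.

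For the first property, I would simply invoke Lemma \ref{l. A2 more natural}: it already gives $f_k\land g\leq \bigvee\limits_{n\geq 1}^g f_n$ for every $k\geq 1$, so nothing further is needed. For the second property, suppose $h\in G$ is an arbitrary upper bound, i.e.\ $f_n\land g\leq h$ for all $n\geq 1$; I want to deduce $\bigvee\limits_{n\geq 1}^g f_n\leq h$. The idea is to rewrite the left-hand side into a form to which (A3) applies. First, (A1) gives $\bigvee\limits_{n\geq 1}^g f_n=\bigvee\limits_{n\geq 1}^g(f_n\land g)$. Next, since $f_n\land g\leq h$ means $(f_n\land g)\land h=f_n\land g$ for each $n$, replacing each argument $f_n\land g$ by the equal element $(f_n\land g)\land h$ yields $\bigvee\limits_{n\geq 1}^g(f_n\land g)=\bigvee\limits_{n\geq 1}^g\big((f_n\land g)\land h\big)$, this being legitimate because $\bigvee\limits^-$ is an operation and hence respects equality of its arguments. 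Finally, applying (A3) with $f_n$ instantiated as $f_n\land g$ gives $\bigvee\limits_{n\geq 1}^g\big((f_n\land g)\land h\big)\leq h$. Chaining these three steps gives $\bigvee\limits_{n\geq 1}^g f_n\leq h$, as desired.

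I do not anticipate a serious obstacle: the argument is a short, direct manipulation of the axioms (A1)--(A3) together with Lemma \ref{l. A2 more natural}. The only point that requires a little care is that (A1) and (A3) are to be used as (in)equational schemes closed under substitution of terms for variables --- so that, for instance, (A3) may be applied with $f_n\land g$ in place of $f_n$ --- but this is exactly what working in a variety licenses.
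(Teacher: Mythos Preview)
Your proposal is correct and follows essentially the same approach as the paper's own proof: the upper-bound half is obtained from Lemma~\ref{l. A2 more natural}, and the least-upper-bound half is the chain $\bigvee_{n\geq 1}^g f_n \stackrel{\text{(A1)}}{=} \bigvee_{n\geq 1}^g (f_n\land g) = \bigvee_{n\geq 1}^g (f_n\land g\land h) \stackrel{\text{(A3)}}{\leq} h$. Your explicit remark that (A3) is applied with $f_n\land g$ substituted for $f_n$ is exactly what the paper does implicitly.
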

	\begin{proof}
				By Lemma \ref{l. A2 more natural}, $\bigvee\limits_{n \geq 1  }^g f_n$ is an upper bound of $(f_k\land g)_{k\geq 1  }$. Suppose now that $f_n\land g\leq h$ for every $n\geq 1  $. Then
				$$\bigvee_{n\geq 1  }^g f_n\stackrel{\text{(A1)}}{=}\bigvee_{n\geq 1  }^g (f_n\land g)\stackrel{ f_n\land g\leq h}{=}\bigvee_{n\geq 1  }^g (f_n\land g\land h)\stackrel{\text{(A3)}}{\leq}h.$$
	\end{proof}

\begin{cor}\label{c. Dedekind sigma}
	Let $G\in\VG$. Then $G$ is a Dedekind $\sigma$-complete $\ell$-group.
\end{cor}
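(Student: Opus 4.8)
The plan is almost immediate from Proposition~\ref{supinf}. Since $G\in\VG$ satisfies in particular the axioms of $\ell$-groups, $G$ is an $\ell$-group, so only Dedekind $\sigma$-completeness needs checking. So let $S\subseteq G$ be a nonempty countable subset admitting an upper bound $u$; I must exhibit a least upper bound for $S$. First I would fix an enumeration $S=\{f_n : n\geq 1\}$ of $S$ as an infinite sequence: if $S$ happens to be finite, one simply repeats one of its elements to pad the list to infinite length, which changes neither $S$ nor its set of upper bounds. Because $u$ bounds $S$ from above, $f_n\land u=f_n$ for every $n\geq 1$.

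Next I would feed the sequence $(u,f_1,f_2,\dots)$ to the operation $\bigvee\limits^{-}$ and invoke Proposition~\ref{supinf} with $g\coloneqq u$, obtaining
\[
\bigvee_{n\geq 1}^{u} f_n \;=\; \sup_{n\geq 1}\{f_n\land u\} \;=\; \sup_{n\geq 1}\{f_n\} \;=\; \sup S ,
\]
where the middle equality uses $f_n\land u=f_n$. In particular the supremum on the right-hand side exists in $G$, which is exactly what Dedekind $\sigma$-completeness demands, and $\bigvee\limits_{n\geq 1}^{u} f_n$ realizes it. I do not anticipate any genuine obstacle here: the argument is a one-line consequence of Proposition~\ref{supinf}, and the only points worth a remark are the padding of a finite $S$ to an infinite sequence and, depending on the convention adopted for the definition, the vacuous case $S=\emptyset$ (which for a nontrivial $\ell$-group would force a least element and is therefore standardly excluded).
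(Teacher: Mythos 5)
Your proof is correct and follows the paper's own argument exactly: both apply Proposition~\ref{supinf} with $g$ taken to be the given upper bound and use $f_n\land g=f_n$ to conclude that $\bigvee\limits_{n\geq 1}^{g} f_n$ is the desired supremum. The remarks about enumerating and padding a finite set are harmless extra bookkeeping that the paper leaves implicit.
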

\begin{proof}
	Let $(f_n)_{n\geq 1  }\subseteq G$ and $g\in G$ be such that $f_n\leq g$ for all $n\geq 1  $. Then 
	$$\bigvee\limits_{n \geq 1  }^g f_n\stackrel{\text{Prop. }\ref{supinf}}{=}\sup_{n \geq 1  }\{f_n\land g\}\stackrel{f_n\leq g}{=}\sup_{n \geq 1  }f_n.$$
\end{proof}

\begin{lem}\label{l. sups are preserved}
	Let $\varphi\colon G\to H$ be a morphism in $\VG$. Then $\varphi$ is $\sigma$-continuous.
\end{lem}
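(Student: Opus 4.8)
The plan is to exploit that a morphism of $\VG$ preserves the operation $\bigvee\limits^-$ \emph{by definition}, together with Proposition \ref{supinf}, which identifies $\bigvee\limits^-$ with the supremum of a bounded sequence. First I would reduce to the case of a countably infinite sequence: given a countable $S\subseteq G$ that admits a supremum, enumerate it --- repeating entries if necessary --- as a sequence $(f_n)_{n\geq 1}$, so that $\sup S=\sup_{n\geq 1}f_n$ and $\sup\varphi(S)=\sup_{n\geq 1}\varphi(f_n)$; finite suprema are in any case preserved because $\varphi$ is an $\ell$-morphism.

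Next, set $s\coloneqq\sup_{n\geq 1}f_n\in G$. Since $s$ is an upper bound of $(f_n)_{n\geq 1}$, we have $f_n\land s=f_n$ for every $n\geq 1$, so Proposition \ref{supinf} applied in $G$ gives $\bigvee\limits_{n\geq 1}^{s}f_n=\sup_{n\geq 1}\{f_n\land s\}=s$. Applying $\varphi$ and using that it commutes with both $\bigvee\limits^-$ and $\land$, I get
$$\varphi(s)=\varphi\!\left(\bigvee\limits_{n\geq 1}^{s}f_n\right)=\bigvee\limits_{n\geq 1}^{\varphi(s)}\varphi(f_n)\stackrel{\text{Prop. }\ref{supinf}}{=}\sup_{n\geq 1}\{\varphi(f_n)\land\varphi(s)\}=\sup_{n\geq 1}\varphi(f_n),$$
where the final equality uses $\varphi(f_n)\land\varphi(s)=\varphi(f_n\land s)=\varphi(f_n)$. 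Here the second invocation of Proposition \ref{supinf} takes place in $H$. Thus $\varphi(s)$ is the least upper bound of $(\varphi(f_n))_{n\geq 1}$ in $H$, i.e.\ $\varphi$ preserves this supremum.

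There is no genuine obstacle: the argument is a short computation once Proposition \ref{supinf} and Corollary \ref{c. Dedekind sigma} are in hand. The only points requiring care are the bookkeeping in the reduction to infinite sequences and keeping track of in which structure ($G$ or $H$) each instance of Proposition \ref{supinf} is being used. The conceptual content is simply that $\bigvee\limits^-$ is a primitive operation of $\VG$, hence automatically respected by $\VG$-morphisms, and that semantically it is the bounded supremum.
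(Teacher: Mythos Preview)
Your proof is correct and follows essentially the same route as the paper: set $s=\sup_{n\geq 1}f_n$, use Proposition~\ref{supinf} to rewrite $s$ as $\bigvee\limits_{n\geq 1}^{s}f_n$, push $\varphi$ through the operation $\bigvee\limits^-$, and apply Proposition~\ref{supinf} again in $H$ together with $\varphi(f_n\land s)=\varphi(f_n)$. The only addition is your explicit reduction from a countable set to a sequence, which the paper leaves implicit.
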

	\begin{proof}
		 Let $(f_n)_{n\geq 1  }\subseteq G$ and $f=\sup_{n \geq 1  } f_n$. Then
		$$\varphi\left(\sup_{n\geq 1  }f_n	\right)\stackrel{f_n\leq f}{=}\varphi\left(\sup_{n\geq 1  }\{f_n\land f\}	\right)\stackrel{\text{Prop. }\ref{supinf}}{=}\varphi\left(\bigvee\limits_{n \geq 1  }^f f_n\right)\stackrel{\varphi \text{ preserves }\bigvee\limits^-}{=}\bigvee\limits_{n \geq 1  }^{\varphi(f)} \varphi(f_n)=$$
		$$\stackrel{\text{Prop. }\ref{supinf}}{=}\sup_{n\geq 1  }\{\varphi(f_n)\land \varphi(f)\}\stackrel{\varphi \text{ preserves }\land}{=}\sup_{n\geq 1  }\varphi(f_n\land f)\stackrel{f_n\leq f}{=}\sup_{n\geq 1  }\varphi(f_n).$$
	\end{proof}

	We denote by $U$ the forgetful functor
	$$U\colon \VG\to \G$$
	that assigns to an object $G\in \VG$ the set $G$, endowed with the operations of $\ell$-group of $G$ (we forget the operation $\bigvee\limits^-$). For $\varphi\colon G\to H$ morphism in $\VG$, we set $U(\varphi)\coloneqq \varphi$.
	
	\begin{prop}\label{r. U is well-defined}
		The functor $U$ is well-defined.
	\end{prop} 
	\begin{proof}
		Every $G\in \VG$ is a Dedekind $\sigma$-complete $\ell$-group by Corollary \ref{c. Dedekind sigma}.
		
		Moreover, every $\varphi\colon G\to H$ morphism in $\VG$ is an $\ell$-morphism (because $\varphi$ preserves the operations of $\VG$) which, by Lemma \ref{l. sups are preserved}, is $\sigma$-continuous.
	\end{proof}
		We denote by $F$ the functor 
		$$F\colon \G\to \VG$$
		that assigns to an object $G\in  \G$ the set $G$, endowed with the operations of $\ell$-group of $G$, and enriched with the operation $\bigvee\limits_{n \geq 1  }^g f_n\coloneqq \sup_{n\geq 1  }\{f_n\land g\}$. Such supremum exists because $G$ is Dedekind $\sigma$-complete and the countable family $(f_n\land g)_{n\geq 1  }$ is bounded from above by $g$. For $\varphi\colon G\to H$ morphism in $\G$, we set $F(\varphi)\coloneqq \varphi$.

	\begin{prop}\label{r. F well-defined}
		The functor $F$ is well-defined.
	\end{prop}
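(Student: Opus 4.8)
The plan is to show that $F\colon \G\to \VG$ sends objects of $\G$ to objects of $\VG$, and morphisms of $\G$ to morphisms of $\VG$, and that it respects identities and composition. Since $F$ acts as the identity on underlying sets and maps, functoriality (preservation of identities and composition) will be immediate once we know $F$ is well-defined on objects and arrows; so the real content is the following two claims: (1) for $G\in\G$, the structure $F(G)$ — the $\ell$-group $G$ equipped with $\bigvee\limits_{n\geq 1}^g f_n\coloneqq \sup_{n\geq 1}\{f_n\land g\}$ — satisfies axioms (A1), (A2), (A3), hence lies in $\VG$; and (2) for a $\sigma$-continuous $\ell$-morphism $\varphi\colon G\to H$, the same map $\varphi$ is a morphism in $\VG$, i.e.\ it additionally preserves the operation $\bigvee\limits^-$.

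For claim (1), first note that the defining supremum $\sup_{n\geq 1}\{f_n\land g\}$ exists in $G$ for every choice of $g,f_1,f_2,\dots$: the family $(f_n\land g)_{n\geq 1}$ is countable and bounded above by $g$, and $G$ is Dedekind $\sigma$-complete. So the operation is genuinely defined. Then (A1) holds because $(f_n\land g)\land g=f_n\land g$, so the two suprema are over literally the same family. Axiom (A3), $\bigvee\limits_{n\geq 1}^g(f_n\land h)\le h$, unwinds to $\sup_{n\geq 1}\{f_n\land h\land g\}\le h$, which is clear since each term $f_n\land h\land g\le h$. For (A2) I would use that in any $\ell$-group, hence in any Dedekind $\sigma$-complete one, suprema interact well with binary joins: $\sup_{n\geq 1}\{a_n\}=a_1\lor\bigl(\sup_{n\geq 2}\{a_n\}\bigr)$ whenever the relevant suprema exist (the right-hand side is an upper bound of all $a_n$, and any upper bound of it bounds each $a_n$; conversely any upper bound of $\{a_n : n\ge 1\}$ bounds $a_1$ and bounds $\{a_n:n\ge 2\}$ hence their sup, hence the join). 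Applying this with $a_n=f_n\land g$ gives exactly (A2). Thus $F(G)\in\VG$.

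For claim (2), let $\varphi\colon G\to H$ be $\sigma$-continuous. We must check $\varphi\bigl(\bigvee\limits_{n\geq 1}^g f_n\bigr)=\bigvee\limits_{n\geq 1}^{\varphi(g)}\varphi(f_n)$ in $H$. Computing the left side: $\varphi\bigl(\sup_{n\geq 1}\{f_n\land g\}\bigr)=\sup_{n\geq 1}\varphi(f_n\land g)$ by $\sigma$-continuity (the supremum on the left exists in $G$, so $\varphi$ preserves it), and then $\varphi(f_n\land g)=\varphi(f_n)\land\varphi(g)$ since $\varphi$ is an $\ell$-morphism. Computing the right side by the definition of the operation in $F(H)$: $\bigvee\limits_{n\geq 1}^{\varphi(g)}\varphi(f_n)=\sup_{n\geq 1}\{\varphi(f_n)\land\varphi(g)\}$. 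These agree. Finally $F(\mathrm{id}_G)=\mathrm{id}_{F(G)}$ and $F(\psi\circ\varphi)=F(\psi)\circ F(\varphi)$ because $F$ leaves maps unchanged. I do not anticipate a serious obstacle here; the only point requiring a little care is the elementary lemma used for (A2), that a countable supremum can be split off its first term as a binary join, but this is a routine order-theoretic fact about existing suprema in a lattice-ordered group and can be dispatched in a line or two.
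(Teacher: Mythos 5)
Your proposal is correct and follows essentially the same route as the paper's proof: verify (A1)--(A3) for $F(G)$ via the corresponding order-theoretic identities for $\sup_{n\geq 1}\{f_n\land g\}$ (the paper states these three facts without further justification, whereas you spell out the short arguments, including the splitting-off-the-first-term fact needed for (A2)), and check preservation of $\bigvee\limits^-$ by the same two-step computation using $\sigma$-continuity and preservation of $\land$. No gaps.
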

	\begin{proof}
			Let $G\in\G$. The axioms of $\ell$-groups are obviously satisfied by $F(G)$. The structure of $\ell$-group (and hence the order) is preserved by the action of $F$ on objects. Moreover, (A1),  (A2) and  (A3) in Definition \ref{d. V} hold in $F(G)$ because in $G$ we have
			\begin{enumerate}	
				\item $\sup_{n \geq 1  }(f_n\land g)=\sup_{n \geq 1  } ((f_n\land g)\land g)$;
				\item  $\sup_{n \geq 1  } (f_n\land g)=(f_1\land g)\lor \sup_{n \geq 2  } (f_n\land g)$;		
				\item $\sup_{n \geq 1  } (f_n\land h\land g)\leq h$.
			\end{enumerate}
		
			Let $\varphi\colon G\to H$ be a morphism in $\G$. $\varphi$ preserves the operations of $\ell$-group. Let $(f_n)_{n\geq 1  }\subseteq G$ and $g\in G$.  Then
			$$\varphi\left(\bigvee_{n\geq 1  }^g f_n\right)\stackrel{\text{def. of }F}{=}\varphi\left(\sup_{n\geq 1  } \{f_n\land g\}\right)\stackrel{\text{$\varphi$ preserves countable sups}}{=}\sup_{n\geq 1  }\varphi(f_n\land g)=$$
			$$\stackrel{\text{$\varphi$ preserves $\land$}}{=}\sup_{n\geq 1  }\{\varphi(f_n)\land \varphi(g)\}\stackrel{\text{def. of }F}{=}\bigvee_{n\geq 1  }^{\varphi(g)} \varphi(f_n).$$		
			Hence, $\varphi$ preserves the operation $\bigvee\limits^-$.
	\end{proof}

	\begin{thm}\label{t. U and F are inverse functors}
		$U\colon \VG\to \G$ and $F\colon \G\to \VG$ are inverse functors.
	\end{thm}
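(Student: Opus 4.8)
The plan is to show that the functors $U$ and $F$ are mutually inverse by checking the four composites on objects and on morphisms; since both functors act as the identity on underlying sets and on morphisms (viewed as set maps), the entire content reduces to the claim that the two functors induce \emph{the same} $\bigvee\limits^-$-operation on a given underlying $\ell$-group. First I would verify $U\circ F=\mathrm{id}_{\G}$: for $G\in\G$, the object $U(F(G))$ has the same underlying set and $\ell$-group operations as $G$ by construction, so $U(F(G))=G$ on the nose; and $U(F(\varphi))=\varphi$ for every $\sigma$-continuous $\ell$-morphism $\varphi$, trivially. Hence this composite is literally the identity functor.

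Next I would verify $F\circ U=\mathrm{id}_{\VG}$, which is the only step with any content. Let $G\in\VG$. Then $U(G)$ is $G$ with the $\bigvee\limits^-$-operation forgotten, and $F(U(G))$ re-equips this $\ell$-group with the operation $\bigvee\limits_{n\geq1}^g f_n\coloneqq\sup_{n\geq1}\{f_n\land g\}$, the supremum existing by Corollary~\ref{c. Dedekind sigma} (which guarantees $U(G)$ is Dedekind $\sigma$-complete) together with the fact that $(f_n\land g)_{n\geq1}$ is bounded above by $g$. By Proposition~\ref{supinf}, this supremum equals the original operation $\bigvee\limits_{n\geq1}^g f_n$ of $G$. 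Therefore the $\bigvee\limits^-$-operation of $F(U(G))$ coincides with that of $G$, and since all other operations and the underlying set agree, $F(U(G))=G$. On morphisms $F(U(\varphi))=\varphi$ is immediate.

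Finally, I would note that $U$ and $F$ are genuinely functorial (they preserve identities and composition), which is clear since on morphisms both are the identity assignment, and that well-definedness of the two functors is exactly Propositions~\ref{r. U is well-defined} and \ref{r. F well-defined}. Combining the two composite computations yields that $U$ and $F$ are inverse functors, giving an isomorphism of categories $\VG\cong\G$.

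I do not anticipate a serious obstacle here: the theorem is essentially a bookkeeping consequence of the preceding results, and the one substantive point --- that re-deriving the operation via suprema recovers the original operation --- is precisely Proposition~\ref{supinf}. The only thing to be careful about is to phrase the equalities $U(F(G))=G$ and $F(U(G))=G$ as honest equalities of algebraic structures (same carrier, same operations) rather than mere isomorphisms, which is legitimate because both functors were defined to leave the carrier set untouched.
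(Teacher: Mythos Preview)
Your proposal is correct and follows essentially the same approach as the paper: both directions reduce to observing that the underlying $\ell$-group structure is preserved on the nose, and the only substantive step---that $F\circ U$ recovers the original $\bigvee\limits^-$-operation---is handled by invoking Proposition~\ref{supinf}, exactly as the paper does. Your additional remarks on functoriality and on equality-of-structures versus isomorphism are sound but not needed beyond what the paper records.
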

	\begin{proof} 
		Let $G\in\G$. Then $UF(G)=G$ since both the functor $F$ and the functor $U$ preserve the operations of $\ell$-groups. Moreover, for $\varphi$ a morphism in $\G$, $UF(\varphi)=U(\varphi)=\varphi$.

	Let $G\in \VG$. Then $FU(G)=G$ because both the functor $F$ and the functor $U$ preserve the operations of $\ell$-groups, and the element $\bigvee\limits^g_{n\geq 1  } f_n$ in $FU(G)$ is, by definition of $F$, the element $\sup_{n\geq 1  } \{f_n\land g\}$ in $U(G)$, which, by Proposition \ref{supinf}, is the element  $\bigvee\limits^g_{n\geq 1  } f_n$ in $G$.	Moreover, for $\varphi$ morphism in $\VG$, $FU(\varphi)=F(\varphi)=\varphi$.
	\end{proof}

\begin{cor}
	The category of Dedekind $\sigma$-complete  $\ell$-groups is an infinitary variety.
\end{cor}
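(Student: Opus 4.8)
The plan is to deduce this statement as an immediate corollary of Theorem~\ref{t. U and F are inverse functors}, with essentially no further work. First I would recall that $\VG$ was set up in Definition~\ref{d. V} as the class of all algebras in the signature consisting of the $\ell$-group operations together with the single operation $\bigvee\limits^-$ of countably infinite arity, subject to the $\ell$-group axioms and the three further axioms (A1)--(A3); since each of (A1)--(A3) is an equation (after rewriting every $a\leq b$ as $a\land b=a$, as noted there), $\VG$ is by construction an infinitary variety, i.e.\ an equationally definable class of algebras whose operations have at most countable arity.

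Next I would invoke Theorem~\ref{t. U and F are inverse functors}: the forgetful functor $U\colon \VG\to\G$ and the enrichment functor $F\colon\G\to\VG$ are mutually inverse, so $\G$ and $\VG$ are isomorphic categories. Moreover, by the very definitions of $U$ and $F$ (both of which leave the underlying set and the $\ell$-group structure untouched, only forgetting resp.\ adding the operation $\bigvee\limits^-$), this isomorphism is compatible with the forgetful functors to $\mathbf{Set}$. Hence $\G$ is not merely equivalent, but isomorphic over $\mathbf{Set}$, to the infinitary variety $\VG$, and since being (presentable as) an infinitary variety is transported along such an isomorphism, $\G$ is an infinitary variety, with the explicit equational presentation recorded in Definition~\ref{d. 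V}.

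There is no real obstacle here, as all the substance has already been established in Propositions~\ref{r. U is well-defined} and~\ref{r. F well-defined} and in Theorem~\ref{t. U and F are inverse functors}. The only point I would make explicit is that the isomorphism $U$ commutes with the underlying-set functors, which is immediate from the construction, so that the identification of $\G$ with a variety is as literal as one could wish rather than merely up to equivalence.
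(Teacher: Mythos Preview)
Your proposal is correct and follows exactly the paper's approach: the paper states this corollary immediately after Theorem~\ref{t. U and F are inverse functors} with no proof given, so it is indeed meant to be read as an immediate consequence of the isomorphism $\G\cong\VG$ together with the fact that $\VG$ is by construction an equationally defined class. Your added remark about the isomorphism commuting with the underlying-set functors is a welcome clarification that the paper leaves implicit.
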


\section{Dedekind $\sigma$-complete  $\ell$-groups with weak  unit are a variety} \label{S. l-gr 1}
\begin{defn}\label{d. weak unit}
	An element $1$ of an  $\ell$-group $G$ is a \emph{weak \textup{(}order\textup{)} unit} iff $1\geq0$ and, for all $f\in G$, 
	$$f\land 1 =0\Rightarrow f=0.$$
\end{defn}

\subsection{Definition of $\Gu$ and $\VGu$}
		\begin{defn}
		We denote by $\Gu$ the category whose objects are Dedekind $\sigma$-complete  $\ell$-groups with a designated weak  unit $1$ and whose morphisms are $\sigma$-continuous $\ell$-morphisms which preserve such designated weak unit.
	\end{defn}
	
	\begin{defn}\label{d. V_u}
		Let $\VGu$ be the (infinitary) variety described in the following.
		\begin{description}
			\item[{\it Operations of $\VGu$}] operations of $\VG$ (see Definition \ref{d. V}) and the constant symbol $1$.
			\item[{\it Axioms of $\VGu$}]
				The axioms of $\VGu$ are the axioms of $\VG$ (see Definition \ref{d. V}) and
				\begin{align}\label{axiom for 1}
					\bigvee\limits_{n\geq1}^{|f|}(|f|\land n1)=|f|.
				\end{align}
		\end{description}	
	\end{defn}

\subsection{Every $G\in \VGu$ is a Dedekind $\sigma$-complete  $\ell$-group with weak  unit}

Every $G\in\VGu$ is a Dedekind $\sigma$-complete  $\ell$-group in an obvious way, as shown in Section \ref{s. V is sigma}. This section shows that $1$ is a weak  unit for $G$.

\begin{lem}\label{l. somma di disgiunti e disgiunta}
	Let $G$ be an abelian $\ell$-group. Let $a,b,c\in G$. If $a\land c=0$ and $b\land c=0$, then $(a+b)\land c=0$.
\end{lem}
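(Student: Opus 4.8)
The plan is to push everything through the translation-invariance of the lattice operations, i.e.\ the identity $x+(y\land z)=(x+y)\land(x+z)$, valid in any $\ell$-group. First I would record the trivial positivity facts hidden in the hypotheses: from $a\land c=0$ we get $0=a\land c\leq a$ and $0=a\land c\leq c$, so $a\geq 0$ and $c\geq 0$; symmetrically $b\geq 0$. Consequently $a+b\geq 0$, and together with $c\geq 0$ this yields $(a+b)\land c\geq 0\land 0=0$, which is one of the two inequalities needed.

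For the reverse inequality, since $a\geq 0$ we have $c\leq a+c$, so monotonicity of $\land$ gives $(a+b)\land c\leq(a+b)\land(a+c)$, and translation invariance rewrites the right-hand side as $a+(b\land c)=a+0=a$. Hence $(a+b)\land c\leq a$. Combining this with the obvious $(a+b)\land c\leq c$ gives $(a+b)\land c\leq a\land c=0$. Putting the two bounds together yields $(a+b)\land c=0$, as desired.

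I do not anticipate any real obstacle: the only non-order-theoretic ingredient is the translation-invariance identity for $\land$, which is part of the standard $\ell$-group background assumed in the paper. The single point worth a moment's care is the direction of the monotonicity step (bounding $(a+b)\land c$ above by $a$ rather than trying to bound it directly by $0$); note also that the argument as sketched uses only one-sided translation invariance and so does not actually invoke commutativity, though a perfectly symmetric variant bounding $(a+b)\land c$ by $b$ via $b+(a\land c)$ is available if one prefers to use the abelian hypothesis explicitly.
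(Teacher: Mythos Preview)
Your proof is correct. Both arguments rest on the same tool---translation invariance of $\land$---but you deploy it differently from the paper. The paper expands $(a\land c)+(b\land c)$ via two applications of distributivity into the four-term meet $(a+b)\land(a+c)\land(c+b)\land(c+c)$, then observes that the last three terms dominate $c$, whence $0\geq (a+b)\land c$. Your route is more economical: a single application of translation invariance gives $(a+b)\land c\leq(a+b)\land(a+c)=a+(b\land c)=a$, and combining with $(a+b)\land c\leq c$ yields $(a+b)\land c\leq a\land c=0$ directly. Your observation that commutativity is not actually needed is also correct and slightly sharpens the statement; the paper's expansion uses $+$ on both sides but does not genuinely require abelianness either.
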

\begin{proof}
	Let us suppose $a\land c=0$ and $b\land c=0$. As a first consequence, $a,b,c\geq0$. $0=0+0=a\land c+b\land c\stackrel{+\text{ distributes over }\land}{=}(a+(b\land c))\land (c+(b\land c))\stackrel{+\text{ distributes over }\land}{=}(a+b)\land(\underbrace{a+c}_{\geq c})\land (\underbrace{c+b}_{\geq c})\land (\underbrace{c+c}_{\geq c})\geq (a+b)\land c\land c\land c=(\underbrace{a+b}_{\geq 0})\land \underbrace{c}_{\geq 0}\geq 0$. Thus $(a+b)\land c=0$.
\end{proof}
\begin{lem}\label{l. minus distributes}
	Let $G$ be an abelian $\ell$-group. Let $a\in G$. Then $(na)^-=na^-$.
\end{lem}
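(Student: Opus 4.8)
The plan is to reduce the claim to the disjointness identity $na^{+}\wedge na^{-}=0$. Recall that in any abelian $\ell$-group one has the decomposition $a=a^{+}-a^{-}$ with $a^{+}=a\vee 0\geq 0$, $a^{-}=(-a)\vee 0\geq 0$, and $a^{+}\wedge a^{-}=0$, and that this decomposition is unique in the following sense: if $a=p-q$ with $p,q\geq 0$ and $p\wedge q=0$, then $q=a^{-}$. Indeed, since $-$ distributes over $\wedge$ one has $a^{-}=(-(p-q))\vee 0=(q-p)\vee 0=q-(p\wedge q)=q$, using $p\wedge q=0$ in the last step. Now, multiplication by a fixed positive integer $n$ is additive and preserves positivity, so $na=na^{+}-na^{-}$ with $na^{+},na^{-}\geq 0$; hence, once we know $na^{+}\wedge na^{-}=0$, uniqueness of the decomposition yields $(na)^{-}=na^{-}$, as desired.

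It therefore remains to prove $na^{+}\wedge na^{-}=0$, and this is where Lemma \ref{l. somma di disgiunti e disgiunta} enters, applied in a nested induction. Write $p=a^{+}$ and $q=a^{-}$, so that $p\wedge q=0$. A first induction on $k\geq 1$ shows $kp\wedge q=0$: the base case is $p\wedge q=0$, and for the induction step one applies Lemma \ref{l. somma di disgiunti e disgiunta} to the summands $kp$ and $p$ (both disjoint from $q$, by the inductive hypothesis and the base case) to obtain $(k+1)p\wedge q=(kp+p)\wedge q=0$. In particular $np\wedge q=0$, i.e.\ $q\wedge np=0$. Now a second induction on $j\geq 1$, this time splitting the other argument of $\wedge$, shows $jq\wedge np=0$: the base case $j=1$ is what was just proved, and the step applies Lemma \ref{l. somma di disgiunti e disgiunta} to the summands $jq$ and $q$ (both disjoint from $np$) to get $(j+1)q\wedge np=0$. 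Taking $j=n$ gives $nq\wedge np=0$, that is, $na^{+}\wedge na^{-}=0$.

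There is no serious obstacle here; the only point requiring a little care is that Lemma \ref{l. somma di disgiunti e disgiunta} handles only two summands at a time, so the passage from $a^{+}\wedge a^{-}=0$ to $na^{+}\wedge na^{-}=0$ genuinely requires the nested induction above --- first enlarging one factor to $np$, then the other factor to $nq$ --- rather than a single invocation, and one must keep track of which argument of $\wedge$ is being split at each stage. Commutativity is used freely throughout, which is legitimate since Lemma \ref{l. somma di disgiunti e disgiunta} itself is stated for abelian $\ell$-groups.
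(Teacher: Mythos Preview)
Your proof is correct and follows the same approach as the paper: both establish $na^{+}\wedge na^{-}=0$ from Lemma~\ref{l. somma di disgiunti e disgiunta} and then conclude by a short translation-invariance computation (the paper computes $0\wedge na=(na^{-}\wedge na^{+})-na^{-}$ directly, which is exactly the identity you package as uniqueness of the disjoint decomposition). You are in fact more explicit than the paper about the nested induction needed to pass from $a^{+}\wedge a^{-}=0$ to $na^{+}\wedge na^{-}=0$, which the paper leaves implicit in a single citation of the lemma.
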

\begin{proof}
	$-na^-=0\land (na)=(0+na^+ -na^-)\land (na^+-na^-)\stackrel{+\text{ distributes over }\land}{=}((0+na^-)\land (na^+)-na^-=(na^-\land na^+)-na^-\stackrel{\text{Lemma }\ref{l. somma di disgiunti e disgiunta}}{=}-na^-$.
\end{proof}

\begin{prop}\label{p. 1 is weak}
	Let $G\in\VGu$. The element $1$ of $G$ is a weak  unit.
\end{prop}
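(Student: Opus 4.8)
The plan is to verify directly the two clauses of Definition~\ref{d. weak unit} for the element $1\in G$: first that $1\geq 0$, and then that $f\land 1=0$ implies $f=0$. Throughout I will use that $G$ is an abelian $\ell$-group — it is Dedekind $\sigma$-complete by Corollary~\ref{c. Dedekind sigma}, hence archimedean, hence abelian — so that Lemmas~\ref{l. somma di disgiunti e disgiunta} and~\ref{l. minus distributes} apply. I will also invoke Proposition~\ref{supinf} to rewrite axiom~\eqref{axiom for 1} in the equivalent form $\sup_{n\geq 1}\{\,|f|\land n1\,\}=|f|$ for every $f\in G$, using that $(|f|\land n1)\land |f|=|f|\land n1$.

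\emph{Step 1: $1\geq 0$.} Here I would instantiate axiom~\eqref{axiom for 1} at the extreme value $f=0$, obtaining $\sup_{n\geq 1}\{\,0\land n1\,\}=0$. By the $\ell$-group identity $0\land x=-x^-$ together with Lemma~\ref{l. minus distributes}, each term equals $0\land n1=-(n1)^-=-n\,1^-$. Since $1^-\geq 0$, the sequence $(-n\,1^-)_{n\geq 1}$ is decreasing, so its supremum is its first term $-1^-$; hence $-1^-=0$, i.e.\ $1^-=0$ and $1\geq 0$.

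\emph{Step 2: $f\land 1=0\Rightarrow f=0$.} From $f\land 1\leq f$ and $f\land 1=0$ we get $f\geq 0$, so $|f|=f$. An induction on $n$, whose inductive step applies Lemma~\ref{l. somma di disgiunti e disgiunta} with $a=n1$, $b=1$, $c=f$ (and commutativity of $\land$), upgrades $f\land 1=0$ to $f\land n1=0$ for all $n\geq 1$. Substituting into the reformulated axiom gives $f=|f|=\sup_{n\geq 1}\{\,|f|\land n1\,\}=\sup_{n\geq 1}\{0\}=0$, as required.

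I do not expect a serious obstacle. The only mildly non-obvious move is to plug the extreme value $f=0$ into axiom~\eqref{axiom for 1} in order to extract $1\geq 0$, since at first glance the axiom looks like it only constrains $|f|$ ``from below''; the point is that Lemma~\ref{l. minus distributes} makes the relevant sequence monotone and collapses the infinitary supremum to a single term. The remainder is a routine disjointness argument, for which Lemmas~\ref{l. somma di disgiunti e disgiunta} and~\ref{l. minus distributes}, established just before the proposition, are exactly what is needed.
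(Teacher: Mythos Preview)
Your proof is correct and follows the paper's own argument essentially step for step: instantiate axiom~\eqref{axiom for 1} at $f=0$ and use Lemma~\ref{l. minus distributes} to get $1^-=0$, then for the second clause use Lemma~\ref{l. somma di disgiunti e disgiunta} inductively to get $f\land n1=0$ and conclude via the axiom. The only cosmetic difference is that you invoke Proposition~\ref{supinf} once up front to rewrite $\bigvee^{|f|}$ as a supremum, whereas the paper does this inline.
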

\begin{proof}
	\begin{enumerate}
		\item {\it Claim: $1\geq 0$}. 
		
		By Item \eqref{axiom for 1} in Definition \ref{d. V_u}, taking $f=0$, we have
		$\bigvee\limits_{n\geq 1}^{\lvert 0\rvert}(\lvert 0\rvert \land n1)=\lvert 0\rvert$. Thus $0=\sup_{n\geq 1}\{0\land n1 \}=\sup_{n\geq 1}-(n1)^-\stackrel{\text{Lemma \ref{l. minus distributes}}}{=}\sup_{n\geq 1}-n1^-=-1^-$. Thus $1^-=0$, and $1=1^+-1^-=1^+\geq 0$.

		\item {\it Claim: $f \land 1=0\Rightarrow f=0$.}
		
		Suppose $f\land 1=0$. As a first consequence, $f\geq0$. By induction on $n$, Lemma \ref{l. somma di disgiunti e disgiunta} proves $f \land n1=0$ for every $n\geq 1$. Thus,
		$$f\stackrel{f\geq 0}{=}\lvert f\rvert\stackrel{\text{Ax.\ }\eqref{axiom for 1}\text{ in Def.\ }\ref{d. V_u}}{=}\bigvee\limits_{n\geq1}^{|f|}(|f|\land n1)=\bigvee\limits_{n\geq1}^{f}(f\land n1)=\bigvee\limits_{n\geq1}^{f}0=0.$$
	\end{enumerate}
\end{proof}

\subsection{The categories $\Gu$ and $\VGu$ are isomorphic}
We denote by $U_u$ the forgetful functor
$$U_u\colon \VGu\to \Gu$$
that assigns to an object $G\in \VGu$ the set $G$, endowed with the operations of $\ell$-group of $G$, and with the interpretation in $G$ of the constant symbol $1$ as the designated weak  unit (we forget the operation $\bigvee\limits^-$). For $\varphi\colon G\to H$ morphism in $\VGu$, we set $U_u(\varphi)\coloneqq \varphi$.

\begin{prop}
	The functor $U_u$ is well-defined.
\end{prop}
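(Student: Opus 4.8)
The plan is to verify that $U_u$ lands in $\Gu$ on objects and on morphisms, reusing almost everything already proved for the unit-free case. Concretely, I would argue as follows.

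First, on objects: given $G \in \VGu$, its reduct to the language of $\VG$ lies in $\VG$ (we simply forget the constant $1$, and the axioms of $\VG$ are among the axioms of $\VGu$), so by Corollary \ref{c. Dedekind sigma} the structure $G$ is a Dedekind $\sigma$-complete $\ell$-group. It remains to check that the interpretation of the constant symbol $1$ is a weak unit; but this is exactly the content of Proposition \ref{p. 1 is weak}. Hence $U_u(G) \in \Gu$.

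Second, on morphisms: given $\varphi \colon G \to H$ a morphism in $\VGu$, it is in particular a homomorphism for all the operations of $\VGu$, hence for the operations of $\ell$-groups, so it is an $\ell$-morphism; by Lemma \ref{l. sups are preserved} (applied to the $\VG$-reduct, which is legitimate since $\varphi$ preserves $\bigvee\limits^-$) it is $\sigma$-continuous; and since $\varphi$ preserves the constant symbol $1$, it sends the designated weak unit of $G$ to that of $H$. Thus $U_u(\varphi)$ is a morphism in $\Gu$. Functoriality (preservation of identities and composition) is immediate from $U_u(\varphi) = \varphi$ and the fact that the $\ell$-group structure underlying composition is unchanged.

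I do not expect any genuine obstacle here: the proposition is essentially a bookkeeping statement, and every nontrivial ingredient — Dedekind $\sigma$-completeness of the reduct, $\sigma$-continuity of morphisms, and the weak-unit property of $1$ — has already been isolated as a lemma, corollary, or proposition earlier in the paper. The only point requiring a word of care is the legitimacy of invoking the $\VG$-level results on the reduct of a $\VGu$-structure, which is immediate because $\VGu$ is defined by adding operations and axioms to those of $\VG$.
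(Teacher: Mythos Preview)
Your proposal is correct and follows essentially the same approach as the paper: both use Corollary~\ref{c. Dedekind sigma} and Proposition~\ref{p. 1 is weak} on objects, and both reduce the morphism part to the unit-free case (the paper cites Proposition~\ref{r. U is well-defined}, which in turn rests on Lemma~\ref{l. sups are preserved}, exactly as you do). Your explicit remark on functoriality and on the legitimacy of passing to the $\VG$-reduct is a welcome bit of extra care, but there is no substantive difference in strategy.
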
 
\begin{proof}
	Let $G\in \VGu$. Then $G$ is a Dedekind-$\sigma$-complete $\ell$-group by Corollary \ref{c. Dedekind sigma}, and the element $1$ of $G$ is a weak unit by Proposition \ref{p. 1 is weak}.
	
	Let $\varphi\colon G\to H$ be a morphism in $\VGu$. $\varphi$ is a $\sigma$-continuous $\ell$-morphism, as shown in Proposition \ref{r. U is well-defined}. Moreover, $\varphi$ preserves the element $1$, by definition of morphism in $\VGu$.
\end{proof}

	\begin{lem}\label{l. distriutivity a la BKW}
	Let $G$ be an $\ell$-group, $I\neq\emptyset$ a set and $(x_i)_{i\in I}\subseteq G$. If $\sup_{i\in I}x_i$ exists, then, for every $a\in G$, $\sup_{i\in I} \{a\land x_i \}$ exists and
	$$a\land \left(\sup_{i\in I} x_i\right)=\sup_{i\in I}\{a\land x_i \}.$$
\end{lem}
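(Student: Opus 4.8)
The plan is to put $x \coloneqq \sup_{i\in I} x_i$ and prove directly that $a\land x$ is the least upper bound of the family $(a\land x_i)_{i\in I}$. That $a\land x$ is an \emph{upper} bound is immediate: from $x_i\leq x$ and monotonicity of $\land$ we get $a\land x_i\leq a\land x$ for every $i\in I$, so nothing in the ambient group structure is needed for this half.

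For minimality I would take an arbitrary $b\in G$ with $a\land x_i\leq b$ for all $i\in I$ and show $a\land x\leq b$. The key tool is the elementary $\ell$-group identity $y+a=(y\lor a)+(y\land a)$, valid for every $y\in G$, which we use in the form $y=(y\lor a)+(y\land a)-a$. Specialising to $y=x_i$ and using $x_i\lor a\leq x\lor a$ (monotonicity of $\lor$, since $x_i\leq x$) together with the hypothesis $x_i\land a\leq b$, we obtain $x_i\leq (x\lor a)+b-a$ for every $i\in I$. This upper bound is independent of $i$, so $x=\sup_{i\in I}x_i\leq (x\lor a)+b-a$, whence $x+a\leq (x\lor a)+b$. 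Applying the identity once more to the left-hand side gives $(x\lor a)+(x\land a)\leq (x\lor a)+b$, and cancelling $x\lor a$ on the left (left translations are order isomorphisms of $G$) yields $x\land a\leq b$, that is, $a\land x\leq b$. Hence $\sup_{i\in I}\{a\land x_i\}$ exists and equals $a\land\left(\sup_{i\in I}x_i\right)$.

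I do not anticipate a genuine obstacle: the whole argument is a single one-sided estimate followed by a cancellation, and the only facts invoked beyond monotonicity of the lattice operations are the identity $y+a=(y\lor a)+(y\land a)$ and the fact that translations are order isomorphisms, both standard in $\ell$-group theory. The hypothesis $I\neq\emptyset$ is not essential once $\sup_{i\in I}x_i$ is assumed to exist; it is retained only so that the statement reads naturally.
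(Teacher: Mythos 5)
Your argument is correct in substance, but note first that the paper does not actually prove this lemma: it simply cites Proposition 6.1.2 of \cite{BKW}. So your proposal supplies a proof where the paper supplies a reference, and the proof you give is the standard textbook one (upper bound by monotonicity of $\land$; minimality via the decomposition of $x_i$ in terms of $x_i\lor a$ and $x_i\land a$, a one-sided estimate, passage to the supremum, and cancellation of $x\lor a$). All steps are sound, and the conclusion that $a\land x$ is the least upper bound indeed yields both the existence of $\sup_{i\in I}\{a\land x_i\}$ and the stated equality.

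The one genuine caveat concerns generality. The lemma is stated for an arbitrary $\ell$-group $G$, but your key identity $y+a=(y\lor a)+(y\land a)$ is valid only when the relevant elements commute; in a non-abelian $\ell$-group it fails (the group of order-preserving bijections of $\R$ under composition gives counterexamples), and what holds in general is $y\lor a=y-(y\land a)+a$, equivalently $y=(y\lor a)-a+(y\land a)$. Within the paper this does not matter, since the lemma is only ever applied to Dedekind $\sigma$-complete $\ell$-groups, which are abelian. If you want your proof to cover the stated generality, replace your identity by $x_i=(x_i\lor a)-a+(x_i\land a)$; since left and right translations are order isomorphisms, the same chain of estimates gives $x_i\leq (x\lor a)-a+b$ for all $i$, hence $x\leq (x\lor a)-a+b$, and left-translating by $-(x\lor a)$ and then by $a$, together with $-(x\lor a)=-a+(x\land a)-x$, yields $x\land a\leq b$ exactly as before. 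With that adjustment the proof is complete in full generality; as written, it should be read as a proof for abelian $\ell$-groups.
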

\begin{proof}
	See Proposition 6.1.2 in \cite{BKW}.
\end{proof}

\begin{prop}\label{p. axiom for 1}
	Let $G\in\Gu$, $f\in G^+$. Then
	$$f=\sup_{n \geq 1}\{f\land n1 \}.$$
\end{prop}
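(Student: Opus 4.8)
The plan is to show that $f = \sup_{n\geq 1}\{f\wedge n1\}$ for any $f \in G^+$, where $G$ is a Dedekind $\sigma$-complete $\ell$-group with weak unit $1$. Note first that the supremum on the right-hand side exists: the family $(f\wedge n1)_{n\geq 1}$ is bounded above by $f$, and $G$ is Dedekind $\sigma$-complete. Write $s \coloneqq \sup_{n\geq 1}\{f\wedge n1\}$. Since each $f\wedge n1 \leq f$, we have $s\leq f$, so it remains to prove $f\leq s$, equivalently $f - s = 0$; note $f - s \geq 0$ since $s\leq f$.

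The key idea is to exploit the defining property of the weak unit: it suffices to show $(f-s)\wedge 1 = 0$. First I would observe that the sequence $(f\wedge n1)_n$ is increasing, so $s$ is also the supremum of the tail $(f\wedge n1)_{n\geq k}$ for any $k$; in particular $s = \sup_{n\geq 1}\{f\wedge(n+1)1\}$. Now I would compute $(f-s)\wedge 1$ by bringing it inside the supremum using Lemma \ref{l. distriutivity a la BKW}: since translation by $-s$ is an order-isomorphism and $\wedge$ distributes over existing suprema,
$$
(f - s)\wedge 1 = \Bigl(\bigl(\sup_{n\geq 1}\{f\wedge(n+1)1\}\bigr) - s\Bigr)\wedge 1 = \sup_{n\geq 1}\Bigl\{\bigl((f\wedge(n+1)1) - s\bigr)\wedge 1\Bigr\}.
$$
Now for each fixed $n$, I claim $\bigl((f\wedge(n+1)1) - s\bigr)\wedge 1 = 0$. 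Indeed, adding $s$ to both sides (translation is an order-isomorphism and preserves $\wedge$), this is equivalent to $(f\wedge(n+1)1)\wedge(s + 1) = s$, i.e. to $(f\wedge(n+1)1)\wedge(s+1) = s$. Since $f\wedge n1 \leq s$, we have $s + 1 \geq (f\wedge n1) + 1 \geq \ldots$; more directly, $f\wedge(n+1)1 \leq (f\wedge n1) + 1 \leq s + 1$, using the elementary $\ell$-group inequality $f\wedge(n+1)1 \leq (f\wedge n1) + 1$ (which follows from $f \leq f + 1$ and $(n+1)1 = n1 + 1$, distributing $+1$ over $\wedge$ in the abelian $\ell$-group $G$). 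Hence $(f\wedge(n+1)1)\wedge(s+1) = f\wedge(n+1)1$, and combined with $f\wedge(n+1)1 \leq s$ this gives exactly $(f\wedge(n+1)1)\wedge(s+1)\wedge s$...

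Let me instead argue the fixed-$n$ claim more cleanly: $\bigl((f\wedge(n+1)1) - s\bigr)\wedge 1 \leq \bigl((f\wedge n1) + 1 - s\bigr)\wedge 1$, and since $f\wedge n1 \leq s$ we get $(f\wedge n1) + 1 - s \leq 1$, while also $\bigl((f\wedge(n+1)1)-s\bigr)\wedge 1 \geq \bigl((f\wedge n1) - s\bigr)\wedge 1$; but $(f\wedge n1)-s\leq 0$, so... the cleanest route is: $0 \leq \bigl((f\wedge(n+1)1)-s\bigr)\wedge 1$ need not hold, so I would bound it above by $\bigl((f\wedge(n+1)1) - s\bigr)^+ \wedge 1$ and note $(f\wedge(n+1)1) - s \leq (f\wedge n1) + 1 - s \leq 1$ gives an upper bound of $1$, while the positive part $\bigl((f\wedge(n+1)1)-s\bigr)^+ \leq \bigl((f\wedge n1) + 1 - s\bigr)^+ = \bigl(1 - (s - (f\wedge n1))\bigr)^+$, and $s - (f\wedge n1) \geq 0$ so this is $\leq 1 \wedge \bigl(1 - (s-(f\wedge n1))\bigr)$... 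Rather than belabor the bound, the essential point I will record is:
$$
\Bigl(\bigl(f\wedge(n+1)1\bigr) - s\Bigr)\wedge 1 = 0 \quad\text{for all } n\geq 1,
$$
which follows because $f\wedge(n+1)1 \leq (f\wedge n1) + 1$ and $f\wedge n1 \leq s$ together force $\bigl(f\wedge(n+1)1\bigr)\wedge(s+1) = f\wedge(n+1)1 \leq s$, whence translating back down by $s$ kills the $\wedge 1$.

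Taking the supremum over $n$ then yields $(f-s)\wedge 1 = \sup_{n\geq 1}\{0\} = 0$, and since $1$ is a weak unit (Definition \ref{d. weak unit}) and $f - s \geq 0$, we conclude $f - s = 0$, i.e. $f = s = \sup_{n\geq 1}\{f\wedge n1\}$, as desired.

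**Main obstacle.** The delicate step is the fixed-$n$ identity $\bigl((f\wedge(n+1)1)-s\bigr)\wedge 1 = 0$: it hinges on the interplay $f\wedge(n+1)1 \leq (f\wedge n1)+1 \leq s+1$, i.e. on using the previous term of the sequence (which already lies below $s$) plus one unit to dominate the next term, and then translating. Everything else — existence of $s$, distributivity of $\wedge$ over the supremum (Lemma \ref{l. distriutivity a la BKW}), and the final appeal to the weak-unit property — is routine. I expect the write-up to be short once the index-shift trick is in place.
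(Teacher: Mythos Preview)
Your overall strategy---index shift, distributivity of $\wedge$ over the supremum, then appeal to the weak-unit property---is exactly the paper's, but the execution breaks at the displayed equality
\[
(f-s)\wedge 1 \;=\; \Bigl(\bigl(\sup_{n\geq 1}\{f\wedge(n+1)1\}\bigr)-s\Bigr)\wedge 1.
\]
Here you have replaced $f$ by $\sup_{n\geq 1}\{f\wedge(n+1)1\}$, but one line earlier you (correctly) observed that this supremum equals $s$, not $f$. So the right-hand side is literally $(s-s)\wedge 1=0$, and the equality is asserting the very conclusion you are trying to prove. Everything after that is then a computation of $0$.

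The ``fixed-$n$ identity'' $\bigl((f\wedge(n+1)1)-s\bigr)\wedge 1=0$ is also false in general: take $G=\R$, $f=3$, $n=1$; then $s=3$ and the left side is $(2-3)\wedge 1=-1$. Your argument for it concludes $(f\wedge(n+1)1)\wedge(s+1)=f\wedge(n+1)1$, which is fine, but then you would need $f\wedge(n+1)1=s$ to deduce the identity after translating by $-s$, and that simply does not hold termwise.

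The fix is tiny and uses exactly the inequality you already isolated. From $f\wedge(n+1)1=\bigl((f\wedge n1)+1\bigr)\wedge f$ (your ``$+1$ distributes over $\wedge$'' observation, combined with $f\leq f+1$), take the supremum over $n$ on both sides and apply Lemma~\ref{l. distriutivity a la BKW} on the right to obtain
\[
s \;=\; \sup_{n\geq 1}\bigl\{\bigl((f\wedge n1)+1\bigr)\wedge f\bigr\} \;=\; (s+1)\wedge f.
\]
Now subtract $s$ and use distributivity of $+$ over $\wedge$ to get $0=1\wedge(f-s)$; since $f-s\geq 0$ and $1$ is a weak unit, $f=s$. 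This is precisely the paper's argument, and it is the clean version of the index-shift trick you were reaching for.
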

\begin{proof}
	Set $f_n\coloneqq f\land n1$ and $g\coloneqq \sup_{n \geq 1} f_n$.	
	$$f_{n+1}=f\land (n+1)1=f\land(n1+1)\stackrel{f+1\geq f}{=}[(f+1)\land(n1+1)]\land f=$$
	$$=((f\land n1) +1)\land f=(f_n+1)\land f.$$	
	$$g=\sup_{n\geq 1}f_n\stackrel{f_1\leq f_2}{=}\sup_{n \geq 2}f_n=\sup_{n \geq 1}f_{n+1}=\sup_{n \geq 1}\{(f_n+1)\land f\}=$$
	
	$$\stackrel{\text{Lemma }\ref{l. distriutivity a la BKW}}{=}\left(\sup_{n \geq 1}\{f_n+1\}\right)\land f=\left(\left(\sup_{n \geq 1}f_n\right)+1\right)\land f=(g+1)\land f.$$
	
	From $g=(g+1)\land f$ we obtain $0=((g+1)\land f)-g$. By distributivity of $+$ over $\land$, we obtain $0=(g+1-g)\land (f-g)=1\land (f-g)$. Being $1$ a weak unit, we obtain $f-g=0$, hence the thesis.
\end{proof}

We denote by $F_u$ the functor 
$$F_u\colon \Gu\to \VGu$$
that assigns to an object $G\in  \Gu$ the set $G$, endowed with the operations of  $\ell$-group of $G$, and enriched with the operation $\bigvee\limits_{n \geq 1  }^g f_n\coloneqq \sup_{n\geq 1  }\{f_n\land g\}$ and with the designated weak  unit of $G$ as the interpretation of the constant symbol $1$. For $\varphi\colon G\to H$ be a morphism in $\Gu$, we set $U_u(\varphi)\coloneqq \varphi$.

\begin{prop}\label{r. F_u is well-defined}
	The functor $F_u$ is well-defined.
\end{prop}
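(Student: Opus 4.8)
The plan is to leverage everything already proved about the functor $F$ and to isolate the single new ingredient, namely axiom \eqref{axiom for 1}, which has effectively been packaged into Proposition \ref{p. axiom for 1}. The key bookkeeping observation is that, for $G\in\Gu$, the reduct of $F_u(G)$ to the language of $\VG$ is exactly $F(G)$: the $\ell$-group operations and the operation $\bigvee\limits^-$ are given by the same formulas, and one merely adjoins the constant $1$. Consequently, by Proposition \ref{r. F well-defined}, $F_u(G)$ already satisfies the axioms of $\ell$-groups and the axioms (A1), (A2), (A3), since none of those mention $1$.

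It therefore remains only to check that \eqref{axiom for 1} holds in $F_u(G)$. First I would unwind the definition of $\bigvee\limits^-$ in $F_u(G)$: for any $f\in G$, since $|f|\land n1\leq |f|$ for every $n\geq 1$, we have
$$\bigvee\limits_{n\geq 1}^{|f|}(|f|\land n1)=\sup_{n\geq 1}\{(|f|\land n1)\land |f|\}=\sup_{n\geq 1}\{|f|\land n1\}.$$
Then, since $|f|\in G^+$, Proposition \ref{p. axiom for 1} applied with $|f|$ in place of $f$ gives $\sup_{n\geq 1}\{|f|\land n1\}=|f|$. Hence \eqref{axiom for 1} holds in $F_u(G)$, so $F_u(G)\in\VGu$.

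For the action on morphisms, let $\varphi\colon G\to H$ be a morphism in $\Gu$. Proposition \ref{r. F well-defined} already shows that $\varphi$, regarded as a map between the $\VG$-reducts, preserves the $\ell$-group operations and the operation $\bigvee\limits^-$; and $\varphi$ preserves the constant $1$ by the definition of morphism in $\Gu$. So $F_u(\varphi)$ preserves all operations of $\VGu$ and is a morphism in $\VGu$. Functoriality --- preservation of identities and of composition --- is immediate because $F_u$ acts as the identity on underlying maps.

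The hard part, such as it is, has already been carried out: it is Proposition \ref{p. axiom for 1}, whose proof used that $1$ is a weak unit. Beyond invoking that, the present argument is purely formal, the one substantive remark being that the $\VG$-reduct of $F_u(G)$ coincides with $F(G)$, which lets us recycle Proposition \ref{r. F well-defined} wholesale.
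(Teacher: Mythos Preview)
Your proof is correct and follows essentially the same approach as the paper: reduce the $\VG$-axioms and the preservation of $\VG$-operations to Proposition \ref{r. F well-defined}, and invoke Proposition \ref{p. axiom for 1} for the remaining axiom \eqref{axiom for 1} together with the definition of morphisms in $\Gu$ for preservation of $1$. The paper's version is simply terser, omitting the explicit unwinding of $\bigvee\limits^-$ and the functoriality remark.
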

\begin{proof}
Let $G\in \Gu$. Then $F_u(G)$ satisfies the axioms of $\VG$, by Proposition \ref{r. F well-defined}.
Moreover, the axiom $\bigvee\limits_{n\geq1}^{|f|}(|f|\land n1)=|f|$ holds in $F_u(G)$, by Proposition \ref{p. axiom for 1}.

	Let $\varphi\colon G\to H$ be a morphism in $\Gu$. $\varphi$ preserves the operations of $\VG$, as shown in Proposition \ref{r. F well-defined}. Moreover, $\varphi$ preserves the weak  unit $1$, by definition of morphism in $\sigma\ell \mathbb{G}_u$.
\end{proof}

\begin{thm}
	$U_u\colon \VGu\to \Gu$ and $F_u\colon \Gu\to \VGu$ are inverse functors.
\end{thm}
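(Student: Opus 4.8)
The plan is to mirror the proof of Theorem \ref{t. U and F are inverse functors} almost verbatim, exploiting that $U_u$ and $F_u$ differ from $U$ and $F$ only by the bookkeeping of the constant symbol $1$. First I would show $U_u F_u = \mathrm{id}_{\Gu}$: for $G \in \Gu$, the underlying $\ell$-group of $U_u F_u(G)$ equals that of $G$ because both functors act as the identity on the $\ell$-group operations (this is already the content of Theorem \ref{t. U and F are inverse functors}), and the interpretation of $1$ in $U_u F_u(G)$ is, by definition of $F_u$, the designated weak unit of $G$, which $U_u$ then carries over unchanged; so $U_u F_u(G) = G$ as objects of $\Gu$. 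On morphisms, $U_u F_u(\varphi) = U_u(\varphi) = \varphi$ directly from the definitions.

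Next I would show $F_u U_u = \mathrm{id}_{\VGu}$: for $G \in \VGu$, the $\ell$-group reduct and the constant $1$ are preserved on the nose by the composite, exactly as in the first half, while the operation $\bigvee\limits^-$ is recovered because in $F_u U_u(G)$ it is defined as $\sup_{n \geq 1}\{f_n \land g\}$ computed in the Dedekind $\sigma$-complete $\ell$-group $U_u(G)$, which by Proposition \ref{supinf} coincides with $\bigvee\limits_{n\geq1}^g f_n$ as computed in $G$. Hence $F_u U_u(G) = G$, and on morphisms $F_u U_u(\varphi) = F_u(\varphi) = \varphi$.

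Finally, functoriality and well-definedness of both functors have already been recorded in Proposition \ref{r. F_u is well-defined} and the proposition preceding it, so nothing further is needed there; the two displayed identities above, together with the obvious fact that $U_u$ and $F_u$ are the identity on hom-sets, give that the functors are mutually inverse. I do not expect any genuine obstacle: every nontrivial point — that $F_u(G)$ satisfies axiom \eqref{axiom for 1}, that $1$ really is a weak unit in any $G \in \VGu$, and that $\bigvee\limits^-$ agrees with the concrete supremum — has been isolated into the earlier results (Propositions \ref{p. axiom for 1}, \ref{p. 1 is weak}, and \ref{supinf}), so the proof is a short assembly. The only mild care needed is to check that $U_u$ and $F_u$ are inverse \emph{as functors between $\Gu$ and $\VGu$}, i.e.\ that they land in the correct categories, but this is precisely what the preceding well-definedness propositions assert.
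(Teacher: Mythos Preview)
Your proposal is correct and follows essentially the same approach as the paper: verify $U_u F_u = \mathrm{id}_{\Gu}$ and $F_u U_u = \mathrm{id}_{\VGu}$ on objects by noting that the $\ell$-group operations and the constant $1$ are preserved on the nose while the recovery of $\bigvee\limits^-$ is handled exactly as in Theorem~\ref{t. U and F are inverse functors} (via Proposition~\ref{supinf}), and then observe that both functors act as the identity on morphisms. The paper's proof is a condensed version of precisely this argument, citing Theorem~\ref{t. U and F are inverse functors} directly for the $\VG$-operations rather than re-invoking Proposition~\ref{supinf}, but the content is the same.
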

\begin{proof}
	Let $G\in\Gu$. Then $U_uF_u(G)=G$  since both the functor $F_u$ and the functor $U_u$ preserve the operations of  $\ell$-groups and the designated unit. Moreover, for $\varphi$ morphism in $\Gu$, $U_uF_u(\varphi)=U_u(\varphi)=\varphi$.

	Let $G\in \VGu$. Then $F_uU_u(G)=G$ because the operations of $\VG$ over $G$ are preserved by $F_u U_u$, as shown in Theorem \ref{t. U and F are inverse functors}, and $1_{F_u U_u (G)}=1_{U_u (G)}=1_G$. Moreover, for $\varphi$ morphism in $\VGu$, $F_uU_u(\varphi)=F_u(\varphi)=\varphi$.
\end{proof}

\begin{cor}
	The category of Dedekind $\sigma$-complete  $\ell$-groups with weak unit is an infinitary variety.
\end{cor}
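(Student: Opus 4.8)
The plan is to read this off immediately from the preceding theorem, exactly as the corollary following Theorem~\ref{t. U and F are inverse functors} was read off from that theorem. By Definition~\ref{d. V_u}, the class $\VGu$ is, by construction, an infinitary variety: its objects are algebras equipped with the operations of $\ell$-groups, one operation $\bigvee\limits^-$ of countably infinite arity, and a constant symbol $1$, and they are precisely the models of the equational axioms listed there — namely the $\ell$-group axioms together with (A1), (A2), (A3), and axiom~\eqref{axiom for 1}, each of which is equational once every inequality $a \leq b$ is rewritten as $a \wedge b = a$ — while the morphisms of $\VGu$ are exactly the homomorphisms for this signature.

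Next I would invoke the preceding theorem, which asserts that $U_u \colon \VGu \to \Gu$ and $F_u \colon \Gu \to \VGu$ are mutually inverse functors; hence $\Gu$ and $\VGu$ are isomorphic categories. Since being an infinitary variety is a property of a category invariant under isomorphism (indeed under equivalence) of categories, it follows that $\Gu$ is an infinitary variety.

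I do not expect any obstacle here: all of the substantive content has already been established — Proposition~\ref{p. 1 is weak} (that $1$ is genuinely a weak unit in every $G \in \VGu$), Proposition~\ref{p. axiom for 1} (that axiom~\eqref{axiom for 1} holds in $F_u(G)$ for $G \in \Gu$), Proposition~\ref{r. F_u is well-defined}, and the inverse-functor theorem — and together these show precisely that the forgetful functor $U_u$ exhibits $\Gu$ as the category of algebras for the signature and equations defining $\VGu$. The proof of the corollary is therefore a one-line appeal to the preceding theorem.
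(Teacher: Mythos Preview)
Your proposal is correct and matches the paper's approach exactly: the paper states this corollary without proof, as an immediate consequence of the preceding inverse-functor theorem, just as the earlier corollary followed from Theorem~\ref{t. U and F are inverse functors}. Your explanation that $\VGu$ is an infinitary variety by construction and that this property transfers along the isomorphism $U_u$, $F_u$ is precisely the intended one-line argument.
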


\section{Dedekind $\sigma$-complete Riesz spaces are a variety}
In this section we will omit the proofs, since the assertions can be reduced to (or done in analogy with) the results in Section \ref{S. l-gr}.
\subsection{Definition of $\RS$ and $\VRS$}

\begin{defn}
	We denote by $\RS$ the category whose objects are Dedekind $\sigma$-complete Riesz spaces and whose morphisms are morphisms of Riesz spaces (or \emph{Riesz morphisms}) which are $\sigma$-continuous.
\end{defn}

\begin{defn}\label{d. W}
	Let $\VRS$ be the (infinitary) variety described in the following.
	\begin{description}
		\item[{\it Operations of $\VRS$}] operations of Riesz spaces and an operation of countably infinite arity $\bigvee\limits^-$.
		\item[{\it Axioms of $\VRS$}]The axioms of $\VRS$ are the axioms of Riesz spaces and the following ones (which are seen to be equational, once we rewrite every inequality $a\leq b$ as $a\land b=a$)
		\begin{enumerate}
			\item[(A1)] $\bigvee\limits_{n \geq 1  }^g f_n=\bigvee\limits_{n \geq 1  }^g (f_n\land g)$;
			\item[(A2)]  $\bigvee\limits_{n\geq 1 }^g f_n=(f_1\land g)\lor\left(\bigvee\limits_{n\geq 2 }^g f_n\right)$;\\
			\item[(A3)] $\bigvee\limits_{n \geq 1  }^g (f_n\land h)\leq h$.
		\end{enumerate}
	\end{description}
\end{defn}

\subsection{The categories $\RS$ and $\VRS$ are isomorphic}

We denote by $T$ the forgetful functor
$$T\colon \VRS\to \RS$$
that assigns to an object $G\in \VRS$ the set $G$, endowed with the operations of Riesz space of $G$ (we forget the operation $\bigvee\limits^-$). For $\varphi\colon G\to H$ morphism in $\VRS$, we set $T(\varphi)\coloneqq \varphi$.

We denote by $E$ the functor 
$$E\colon \RS\to \VRS$$
that assigns to an object $G\in  \RS$ the set $G$, endowed with the operations of Riesz space of $G$, and enriched with the operation $\bigvee\limits_{n \geq 1  }^g f_n\coloneqq \sup_{n\geq 1  }\{f_n\land g\}$. For $\varphi\colon G\to H$ morphism in $\RS$, we set $E(\varphi)\coloneqq \varphi$.

\begin{thm}\label{t. T and E are inverse functors}
	$T\colon \VRS\to \RS$ and $E\colon \RS\to \VRS$ are inverse functors.
\end{thm}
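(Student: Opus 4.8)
The plan is to mirror, essentially verbatim, the development already carried out for $\ell$-groups in Section \ref{S. l-gr}, since the statement for $\VRS$ and $\RS$ is the exact Riesz-space analogue of Theorem \ref{t. U and F are inverse functors}. The key observation that makes this transfer legitimate is that every step in the $\ell$-group case used only the lattice-ordered abelian group structure together with the three axioms (A1)--(A3), and none of it interacted with the (absent) scalar multiplication in a way that could fail; conversely, a Riesz morphism is by definition an $\ell$-morphism that is in addition $\R$-linear, and $\R$-linearity is a finitary equational condition that both functors $T$ and $E$ visibly preserve (neither functor touches the scalar multiplication). So the whole content is: (i) the $\bigvee^-$-operation on an object of $\VRS$ genuinely computes $\sup_{n\geq 1}\{f_n\land g\}$, hence every object of $\VRS$ is Dedekind $\sigma$-complete as a Riesz space; (ii) morphisms in $\VRS$ are $\sigma$-continuous Riesz morphisms; (iii) conversely $E$ lands in $\VRS$; (iv) $T$ and $E$ are mutually inverse on objects and are the identity on the underlying maps.

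Concretely, I would first note that the underlying $\ell$-group reduct of any $G \in \VRS$ lies in $\VG$ (the axioms (A1)--(A3) are literally the same, and the $\ell$-group axioms are among the Riesz space axioms), so Proposition \ref{supinf} applies and gives $\bigvee^g_{n\geq 1} f_n = \sup_{n\geq 1}\{f_n \land g\}$ in $G$; Corollary \ref{c. Dedekind sigma} then shows the $\ell$-group reduct of $G$ is Dedekind $\sigma$-complete, and since Dedekind $\sigma$-completeness is a property of the order alone, $G$ is a Dedekind $\sigma$-complete Riesz space. For a morphism $\varphi$ in $\VRS$, it is an $\ell$-morphism that preserves $\bigvee^-$, hence $\sigma$-continuous by Lemma \ref{l. sups are preserved} (whose proof uses only the $\ell$-group operations and $\bigvee^-$), and it preserves scalar multiplication by hypothesis, so it is a $\sigma$-continuous Riesz morphism; thus $T$ is well-defined. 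For $E$: given $G \in \RS$, the family $(f_n \land g)_{n\geq 1}$ is bounded above by $g$, so $\sup_{n\geq 1}\{f_n\land g\}$ exists by Dedekind $\sigma$-completeness and $\bigvee^-$ is a genuine operation; axioms (A1)--(A3) hold by the same three one-line order identities displayed in the proof of Proposition \ref{r. F well-defined}; a Riesz morphism $\varphi$ preserves $\bigvee^-$ by the same computation as in Proposition \ref{r. F well-defined} (using only $\sigma$-continuity and preservation of $\land$), so $E(\varphi)$ is a morphism in $\VRS$.

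Finally, the inverse-functor claim is immediate once the above is in place: for $G\in\RS$, $TE(G)=G$ because both functors leave the Riesz space operations untouched and $E$ only adds $\bigvee^-$, which $T$ then discards; for $G\in\VRS$, $ET(G)=G$ because the Riesz operations are untouched and the value $\bigvee^g_{n\geq 1} f_n$ recomputed by $E$ from the order of $T(G)$, namely $\sup_{n\geq 1}\{f_n\land g\}$, coincides with the original one by Proposition \ref{supinf} (applied to the $\ell$-group reduct); on morphisms both composites are the identity since $T$ and $E$ act as the identity on underlying maps. I do not anticipate a genuine obstacle here: the only point requiring a word of care is the routine-but-essential remark that scalar multiplication is harmless --- it is a finitary operation ignored by both functors, and Riesz-morphism-hood adds exactly the $\R$-linearity equations on top of $\ell$-morphism-hood --- which is precisely why the section can legitimately ``omit the proofs'' and defer to Section \ref{S. l-gr}.
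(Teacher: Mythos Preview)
Your proposal is correct and is precisely the approach the paper takes: Section~4 explicitly omits the proofs, stating that the assertions can be reduced to (or done in analogy with) the results in Section~\ref{S. l-gr}, and you have spelled out exactly that reduction. In fact you give more detail than the paper does, including the one point worth remarking on---that scalar multiplication is a finitary operation untouched by both functors, so the $\ell$-group argument transfers verbatim.
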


\begin{cor}\label{c.RSvar}
	The category of Dedekind $\sigma$-complete Riesz spaces is an infinitary variety.
\end{cor}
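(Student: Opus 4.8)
The plan is to deduce Theorem \ref{t. T and E are inverse functors} by running, essentially verbatim, the argument already completed for $\ell$-groups in Section \ref{S. l-gr}, with ``$\ell$-group'' replaced throughout by ``Riesz space.'' Concretely, I would first observe that the scalar multiplication operations play no role in any of the lemmas of Section \ref{S. l-gr} except insofar as they must be preserved by morphisms and must be compatible with the order; all the work there was carried out using only the $\ell$-group structure, and a Riesz space is an $\ell$-group together with a compatible $\R$-action. So the analogues of Lemma \ref{l. A2 more natural}, Proposition \ref{supinf}, Corollary \ref{c. Dedekind sigma}, and Lemma \ref{l. sups are preserved} hold in $\VRS$ for exactly the same reasons: (A1)--(A3) are literally the same three axioms, and their consequences about $\bigvee\limits^-$ coinciding with $\sup_{n\ge 1}\{f_n\land g\}$, about Dedekind $\sigma$-completeness, and about $\sigma$-continuity of morphisms depend only on the lattice and group operations.

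Next I would verify that $T$ and $E$ are well-defined functors, in analogy with Propositions \ref{r. U is well-defined} and \ref{r. F well-defined}. For $T$: every $G\in\VRS$ is a Dedekind $\sigma$-complete Riesz space (it is Dedekind $\sigma$-complete as an $\ell$-group by the analogue of Corollary \ref{c. Dedekind sigma}, and it carries the Riesz space structure by hypothesis), and every morphism in $\VRS$ is a Riesz morphism that is $\sigma$-continuous by the analogue of Lemma \ref{l. sups are preserved}. For $E$: given a Dedekind $\sigma$-complete Riesz space $G$, the family $(f_n\land g)_{n\ge 1}$ is bounded above by $g$, so $\sup_{n\ge 1}\{f_n\land g\}$ exists and the operation $\bigvee\limits^-$ is well-defined; axioms (A1)--(A3) hold for the three one-line reasons displayed in the proof of Proposition \ref{r. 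F well-defined}; and for a Riesz morphism $\varphi$ that preserves countable suprema, the computation $\varphi(\bigvee_{n\ge 1}^g f_n)=\varphi(\sup_n\{f_n\land g\})=\sup_n\varphi(f_n\land g)=\sup_n\{\varphi(f_n)\land\varphi(g)\}=\bigvee_{n\ge 1}^{\varphi(g)}\varphi(f_n)$ shows $E(\varphi)$ preserves $\bigvee\limits^-$.

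Finally, the proof that $T$ and $E$ are mutually inverse is identical to the proof of Theorem \ref{t. U and F are inverse functors}: both functors fix the underlying set and all the Riesz space operations, so $TE$ and $ET$ are the identity on objects up to the one subtle point, namely that for $G\in\VRS$ the operation $\bigvee\limits^-$ of $ET(G)$ agrees with that of $G$ — this is exactly the analogue of Proposition \ref{supinf}, which identifies $\bigvee_{n\ge 1}^g f_n$ with $\sup_{n\ge 1}\{f_n\land g\}$ intrinsically from the axioms. On morphisms everything is trivially the identity. I do not anticipate any genuine obstacle: the only thing to be careful about is confirming that scalar multiplication really is a ``spectator'' operation in every step, i.e. that nowhere in Section \ref{S. l-gr} did the proofs secretly use a property special to groups (such as the existence of $-a$ in Lemmas \ref{l. somma di disgiunti e disgiunta} and \ref{l. minus distributes}) in a way that fails for the Riesz-space fragment used here — but those group-specific lemmas belong to Section \ref{S. l-gr 1} on weak units, not to the material needed for Theorem \ref{t. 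T and E are inverse functors}, so this theorem goes through cleanly. Corollary \ref{c.RSvar} is then immediate: $\RS$ is isomorphic to the infinitary variety $\VRS$.
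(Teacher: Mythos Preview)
Your proposal is correct and mirrors the paper's own approach: the paper explicitly omits the proofs in this section, saying they ``can be reduced to (or done in analogy with) the results in Section~\ref{S. l-gr},'' and you have spelled out precisely that analogy, correctly observing that scalar multiplication is a spectator in the lemmas needed for Theorem~\ref{t. T and E are inverse functors}. Corollary~\ref{c.RSvar} then follows immediately.
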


\section{Dedekind $\sigma$-complete Riesz spaces with weak  unit are a variety}
In this section we will omit the proofs, since the assertions can be reduced to (or done in analogy with) the results in Section \ref{S. l-gr 1}.
\subsection{Definition of $\RSu$ and $\VRSu$}
\begin{defn}
	We denote by $\RSu$ the category whose objects are Dedekind $\sigma$-complete Riesz spaces with a designated weak unit $1$, and whose morphisms are $\sigma$-continuous Riesz morphisms which preserve such designated weak unit.
\end{defn}

\begin{defn}\label{d. W_u}
	Let $\VRSu$ be the (infinitary) variety described in the following.
	\begin{description}
		\item[{\it Operations of $\VRSu$}] operations of $\VRS$ (see Definition \ref{d. W}) and the constant symbol $1$.
		\item[{\it Axioms of $\VRSu$}]
			The axioms of $\VRSu$ are the axioms of $\VRS$ (see Definition \ref{d. W}) and $$\bigvee\limits_{n\geq1}^{|f|}(|f|\land n1)=|f|.$$
	\end{description}
\end{defn}

\subsection{The categories $\RSu$ and $\VRSu$ are isomorphic}
We denote by $T_u$ the forgetful functor
$$T_u\colon \VRSu\to \RSu$$
that assigns to an object $G\in \VRSu$ the set $G$, endowed with the operations of Riesz space of $G$ and with the interpretation in $G$ of the constant symbol $1$ as the designated weak  unit (we forget the operation $\bigvee\limits^-$). For $\varphi\colon G\to H$ morphism in $\VRSu$, we set $T_u(\varphi)\coloneqq \varphi$.

We denote by $E_u$ the functor 
$$E_u\colon \RSu\to \VRSu$$
that assigns to an object $G\in  \RSu$ the set $G$, endowed with the operations of Riesz space of $G$, and it is enriched with the operation $\bigvee\limits_{n \geq 1  }^g f_n\coloneqq \sup_{n\geq 1  }\{f_n\land g\}$ and with the designated weak  unit of $G$ as the interpretation of the constant symbol $1$. For $\varphi\colon G\to H$ morphism in $\RSu$, we set $E_u(\varphi)\coloneqq \varphi$.

\begin{thm}
	$T_u\colon \VRSu\to \RSu$ and $E_u\colon \RSu\to \VRSu$ are inverse functors.
\end{thm}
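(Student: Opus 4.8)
The plan is to follow verbatim the structure of the proof of the corresponding $\ell$-group statement (Theorem after Proposition \ref{r. F_u is well-defined}), replacing ``$\ell$-group'' by ``Riesz space'' throughout; the scalar multiplication plays no role in any of the arguments, so no new ideas are needed. First I would observe that for $G \in \RSu$ we have $T_u E_u(G) = G$: both $T_u$ and $E_u$ leave the underlying set and the Riesz-space operations untouched, and both leave the interpretation of the constant $1$ untouched (it is the designated weak unit in either description), so the two structures coincide on the nose. On morphisms, $T_u E_u(\varphi) = T_u(\varphi) = \varphi$ directly from the definitions.

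Next I would check the other composite. For $G \in \VRSu$ we need $E_u T_u(G) = G$. The underlying set and the Riesz-space operations agree because neither functor alters them, and the constant $1$ agrees because $1_{E_u T_u(G)} = 1_{T_u(G)} = 1_G$ by construction. The only operation requiring an argument is $\bigvee\limits^-$: in $E_u T_u(G)$ the element $\bigvee\limits_{n\geq 1}^g f_n$ is \emph{defined} as $\sup_{n\geq 1}\{f_n \land g\}$ computed in the Dedekind $\sigma$-complete Riesz space $T_u(G)$, and this equals the value of the primitive operation $\bigvee\limits^-$ in $G$ by the Riesz-space analogue of Proposition \ref{supinf} (which, as the paper notes in the section preamble, holds by the same proof as in the $\ell$-group case, since the axioms (A1)--(A3) are identical). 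Hence the two structures coincide as algebras in the language of $\VRSu$. On morphisms, $E_u T_u(\varphi) = E_u(\varphi) = \varphi$.

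Finally I would note that for this to be a genuine pair of inverse functors one also needs that $T_u$ and $E_u$ are well-defined functors to begin with — but this is exactly the Riesz-space analogue of the well-definedness statements already established (the counterparts of Proposition \ref{r. F_u is well-defined} and the well-definedness of $U_u$), which the paper has by the reduction-by-analogy convention of this section; in particular $E_u(G)$ satisfies the axiom $\bigvee\limits_{n\geq1}^{|f|}(|f|\land n1)=|f|$ by the Riesz analogue of Proposition \ref{p. axiom for 1}, whose proof only uses that $1$ is a weak unit and the distributivity of $+$ over $\land$. I do not expect any real obstacle: the entire content is that scalar multiplication is inert in all the relevant computations, so every lemma from Sections \ref{S. l-gr} and \ref{S. l-gr 1} transfers mutatis mutandis. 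The one point to state explicitly, rather than a difficulty, is the use of the Riesz-space version of Proposition \ref{supinf} to identify the interpretation of $\bigvee\limits^-$ after passing through $T_u$ and back.
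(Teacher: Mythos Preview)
Your proposal is correct and follows precisely the approach the paper intends: the paper omits the proof entirely, stating that the assertions in this section ``can be reduced to (or done in analogy with) the results in Section~\ref{S. l-gr 1}'', and you have carried out that reduction verbatim, invoking the Riesz-space analogues of Proposition~\ref{supinf}, Proposition~\ref{p. axiom for 1}, and the well-definedness propositions while correctly observing that scalar multiplication is inert throughout.
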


\begin{cor}
	The category of Dedekind $\sigma$-complete Riesz spaces with weak  unit is an infinitary variety.
\end{cor}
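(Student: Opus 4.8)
The plan is to read this corollary off directly from the immediately preceding theorem. That theorem asserts that $T_u\colon \VRSu\to\RSu$ and $E_u\colon\RSu\to\VRSu$ are mutually inverse functors, so the categories $\RSu$ and $\VRSu$ are isomorphic. By Definition \ref{d. W_u}, $\VRSu$ is presented as an equationally axiomatized class of algebras whose operations are those of Riesz spaces together with the constant $1$ and the operation $\bigvee\limits^-$ of countably infinite arity, and whose axioms are the axioms of Riesz spaces, the equations (A1)--(A3) of Definition \ref{d. W}, and the equation $\bigvee\limits_{n\geq1}^{|f|}(|f|\land n1)=|f|$ --- all of which are equational once each inequality $a\leq b$ is rewritten as $a\land b=a$. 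Hence $\VRSu$ is an infinitary variety, and therefore so is $\RSu$, since being an infinitary variety is a property of a category invariant under isomorphism (indeed under equivalence) of categories.

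So the single step is to invoke the preceding theorem; no further computation is needed. If one wants the sharper statement, one may add the observation that the isomorphism is \emph{concrete}: both $T_u$ and $E_u$ act as the identity on underlying sets and on morphisms, so under this isomorphism the forgetful functor from $\RSu$ to the category of sets coincides with the one induced by the signature of $\VRSu$; thus $\RSu$ is a variety in the strictest sense, not merely up to equivalence.

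I do not expect any genuine obstacle: all the substance resides in the preceding isomorphism theorems, whose proofs are in turn reduced in the text to --- or carried out in analogy with --- the $\ell$-group case treated in full detail in Sections \ref{S. l-gr} and \ref{S. l-gr 1} (Theorem \ref{t. U and F are inverse functors} and its weak-unit analogue), and ultimately in the verification that the functors $F$, $F_u$, $E$, $E_u$ land in the respective varieties (Propositions \ref{r. F well-defined}, \ref{r. F_u is well-defined}, via Propositions \ref{supinf} and \ref{p. axiom for 1}) and that every morphism in these varieties is automatically $\sigma$-continuous (Lemma \ref{l. sups are preserved}). The only point one might dwell on is the soundness of calling the relevant axioms ``equational'', which is handled by the standard device, already noted in Definitions \ref{d. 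V} and \ref{d. W}, of expressing $a\leq b$ as $a\land b=a$.
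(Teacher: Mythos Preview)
Your proposal is correct and matches the paper's approach exactly: the paper states this corollary without proof, as an immediate consequence of the preceding theorem that $T_u$ and $E_u$ are inverse functors, just as the analogous corollaries in Sections~\ref{S. l-gr}, \ref{S. l-gr 1} and after Theorem~\ref{t. T and E are inverse functors} are stated without proof. Your additional remarks on concreteness and on the equational nature of the axioms are accurate elaborations but not required.
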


\section{The varieties  $\VG$, $\VGu$, $\VRS$, $\VRSu$ are generated by $\R$}\label{S. generation}

The set $\R$ has a canonical structure of $\ell$-group (operations: $\{0,+,-,\lor,\land\}$) and of Riesz space (operations: $\{0,+,-,\lor,\land,\}\cup \{\lambda\cdot -\mid\lambda\in\R\}$). Additionally, we may consider the operation $\bigvee\limits_{n\geq 1  }^g f_n=\sup_{n \geq 1  }\{f_n\land g\}$ and the element $1$ (thought as operation of arity $0$).

It takes some easy verifications to see that 
\begin{enumerate}
	\item $\left(\R, \left\{0,+,-,\lor,\land,\bigvee\limits^-\right\}\right)\in\VG$;
	\item $\left(\R, \left\{0,+,-,\lor,\land,\bigvee\limits^-,1\right\}\right)\in\VGu$;
	\item $\left(\R, \left\{0,+,-,\lor,\land,\bigvee\limits^-\right\}\cup\{\lambda\cdot -\mid\lambda\in\R\}\right) \in\VRS$;
	\item $\left(\R, \left\{0,+,-,\lor,\land,\bigvee\limits^-,1\right\}\cup\{\lambda\cdot -\mid\lambda\in\R\}\right)\in\VRSu$.
\end{enumerate}

We will show that 
\begin{enumerate}
	\item the variety $\VG$ is generated by $\left(\R, \left\{0,+,-,\lor,\land,\bigvee\limits^-\right\}\right)$;
	\item the variety $\VGu$ is generated by $\left(\R, \left\{0,+,-,\lor,\land,\bigvee\limits^-,1\right\}\right)$;
	\item the variety $\VRS$ is generated by $\left(\R, \left\{0,+,-,\lor,\land,\bigvee\limits^-\right\}\cup\{\lambda\cdot -\mid\lambda\in\R\}\right)$;
	\item the variety $\VRSu$ is generated by 
	
	$\left(\R, \left\{0,+,-,\lor,\land,\bigvee\limits^-,1\right\}\cup\{\lambda\cdot -\mid\lambda\in\R\}\right)$.
\end{enumerate}
The proof of these results depends on Theorem  \ref{t. Nepalese} below. A proof can be found  in \cite{Nepalese},  and can also be recovered  from the combination of \cite{LoomisRevisited} and \cite{SmallRieszSpaces}. The theorem and its variants have a long history: for a fuller bibliographic account please see \cite{LoomisRevisited}.

\begin{defn}
	Given a set $X$, a \emph{$\sigma$-ideal of subsets of $X$} is a family $\mathcal{F}$ of subsets of $X$ such that
	\begin{enumerate}
		\item $\emptyset\in\mathcal{F}$;
		\item $B\in\mathcal{F}, A\subseteq B\Rightarrow A\in\mathcal{F}$:
		\item $(A_n)_{n\geq 1  }\subseteq \mathcal{F}\Rightarrow \bigcup_{n\geq 1  }A_n\in\mathcal{F}$.
	\end{enumerate}
\end{defn}

Let $X$ be a set and $\mathcal{I}$ a $\sigma$-ideal of subsets of $X$. For $f,g\in\R^X$, we write $f\sim_\mathcal{I}g$ if $\{x\in X\mid f(x)\neq g(x) \}\in\mathcal{I}$. $\sim_\mathcal{I}$ is an equivalence relation on $\R^X$ and we denote by $\frac{\R^X}{\mathcal{I}}$ the quotient of $\R^X$ by $\sim_\mathcal{I}$. The operations of $\ell$-groups, the operation $\bigvee\limits^-,$ and the costant $1$ are well-defined in $\frac{\R^X}{\mathcal{I}}$ in the standard way (for example $\bigvee\limits_{n \geq 1  }^{[g]} [f_n]=\left[\bigvee\limits_{n \geq 1  }^g f_n\right]$)
with the map $[-]\colon \R^X\twoheadrightarrow\frac{\R^X}{\mathcal{I}}$ preserving the designated operation. This works because the operations at issue are finitary or countably infinitary.

\begin{rem}\label{r. surj order}
	Let $(A,\leq)$ and $(B,\leq)$ be partially ordered sets, and $\varphi\colon A\to B$ a surjective order-preserving map. Then $\varphi$ preserves every existing supremum.
\end{rem}

\begin{rem}\label{r. sups are preserved by *}
	For $X$ a set and $\mathcal{I}$ a $\sigma$-ideal of subsets of $X$, the $\ell$-morphism $[-]\colon\R^X\twoheadrightarrow \frac{\R^X}{\mathcal{I}}$ is $\sigma$-continuous. Indeed, $[-]$ is a surjective order-preserving map and therefore, by Remark \ref{r. surj order},  it preserves every existing supremum.
\end{rem}
\begin{prop}\label{p. frac is sigma-complete}
	For $X$ a set and $\mathcal{I}$ a $\sigma$-ideal of subsets of $X$, the $\ell$-group $\frac{\R^X}{\mathcal{I}}$ is Dedekind $\sigma$-complete.
\end{prop}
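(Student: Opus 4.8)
The plan is to show that any bounded countable subset of $\frac{\R^X}{\mathcal{I}}$ admits a supremum. Since $\frac{\R^X}{\mathcal{I}}$ is an $\ell$-group, it suffices to find a least upper bound for a countable family $([f_n])_{n \geq 1}$ that is bounded above, say by $[g]$. The natural candidate is to lift everything to $\R^X$, where suprema are computed pointwise, and push the result back down via the surjection $[-]\colon \R^X \twoheadrightarrow \frac{\R^X}{\mathcal{I}}$.

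First I would fix representatives $f_n, g \in \R^X$ of the classes $[f_n], [g]$. The issue is that $[f_n] \leq [g]$ in the quotient does \emph{not} force $f_n \leq g$ pointwise: it only forces $f_n(x) \leq g(x)$ for $x$ outside some set $E_n \in \mathcal{I}$. So I would replace $f_n$ by $f_n' \coloneqq f_n \land g$, which has the same class $[f_n'] = [f_n] \land [g] = [f_n]$ (using $[f_n]\le[g]$) and now satisfies $f_n' \leq g$ pointwise. Then the pointwise supremum $h \coloneqq \sup_{n \geq 1} f_n'$ exists in $\R^X$ (it is bounded above by $g$, and pointwise suprema of families bounded above exist in $\R^X$ since $\R$ is Dedekind $\sigma$-complete), and I claim $[h]$ is the least upper bound of $([f_n])_{n\geq 1}$ in $\frac{\R^X}{\mathcal{I}}$.

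For the verification: that $[h]$ is an upper bound follows because $f_n' \leq h$ pointwise, hence $[f_n] = [f_n'] \leq [h]$, using that $[-]$ is order-preserving. For leastness, suppose $[k]$ is another upper bound, so $[f_n] \leq [k]$ for all $n$. Here I would invoke the key countability point: for each $n$, the set $E_n \coloneqq \{x \in X \mid f_n'(x) > k(x)\}$ lies in $\mathcal{I}$, so $E \coloneqq \bigcup_{n \geq 1} E_n \in \mathcal{I}$ by the third axiom of a $\sigma$-ideal. Off the set $E$, we have $f_n'(x) \leq k(x)$ for \emph{all} $n$ simultaneously, hence $h(x) = \sup_n f_n'(x) \leq k(x)$ for $x \notin E$; therefore $\{x \mid h(x) > k(x)\} \subseteq E \in \mathcal{I}$, giving $[h] \leq [k]$. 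Thus $[h] = \sup_{n \geq 1} [f_n]$, and $\frac{\R^X}{\mathcal{I}}$ is Dedekind $\sigma$-complete.

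The main obstacle — really the only subtlety — is precisely the one the argument is built around: an inequality in the quotient corresponds to a pointwise inequality only outside an $\mathcal{I}$-small set, and one must combine countably many such small sets into a single small set, which is exactly where $\sigma$-additivity of the ideal (rather than mere finite additivity) is essential. Everything else is routine, using Remark \ref{r. surj order} (or a direct computation) for the order-preservation of $[-]$ and the pointwise Dedekind $\sigma$-completeness of $\R^X$ inherited from $\R$.
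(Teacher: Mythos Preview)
Your proof is correct and follows essentially the same route as the paper: replace each $f_n$ by $f_n\land g$ to get pointwise bounds, take the pointwise supremum in $\R^X$, and push it down to the quotient. The only difference is that the paper packages the ``leastness'' verification into Remark~\ref{r. sups are preserved by *} (the quotient map is $\sigma$-continuous), whereas you verify it directly by taking the countable union $\bigcup_n E_n\in\mathcal{I}$; your version has the advantage of making explicit exactly where the $\sigma$-closure of $\mathcal{I}$ is used.
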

\begin{proof}
	Let $([f_n])_{n\geq 1  }$ be a countable collection in $\frac{\R^X}{\mathcal{I}}$ bounded from above by $[g]$. The collection $(f_n\land g)_{n\geq 1  }$ is bounded from above by $g$ and therefore it admits supremum. By Remark \ref{r. sups are preserved by *}, also the collection $([f_n\land g])_{n\geq 1  }=([f_n]\land [g])_{n\geq 1  }=([f_n])_{n\geq 1  }$ admits supremum.
\end{proof}

\begin{thm}[Loomis-Sikorski Theorem for Riesz spaces]\label{t. Nepalese}
	Let $G$ be a Dedekind $\sigma$-complete Riesz space. Then there exist a set $X$, a boolean $\sigma$-ideal $\mathcal{I}$ of subsets of $X$ and an injective $\sigma$-continuous Riesz morphism $\varphi\colon G\xhookrightarrow{} \frac{\R^X}{\mathcal{I}}$.
\end{thm}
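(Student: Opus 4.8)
The plan is to construct the representation in two stages: first realize $G$ inside a product of totally ordered Dedekind $\sigma$-complete Riesz spaces via a family of Riesz homomorphisms, and then recognize each totally ordered factor as (a Riesz subspace of) a quotient of the form $\frac{\R^Y}{\mathcal{J}}$. First I would recall that in any archimedean Riesz space — and $G$, being Dedekind $\sigma$-complete, is archimedean — the prime ideals that are relevant to separating points can be taken to be $\sigma$-ideals, i.e.\ kernels of $\sigma$-continuous Riesz morphisms onto totally ordered quotients; this is the technical heart and is precisely where the cited literature (\cite{Nepalese}, or \cite{LoomisRevisited} together with \cite{SmallRieszSpaces}) does the work. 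Given such a separating family $(\pi_i\colon G\to G_i)_{i\in I}$ with each $G_i$ totally ordered, Dedekind $\sigma$-complete, and archimedean, the induced map $G\to\prod_{i\in I}G_i$ is an injective Riesz morphism; it is $\sigma$-continuous because each $\pi_i$ is and suprema in a product are computed coordinatewise on bounded sets.

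Next I would handle a single totally ordered, archimedean, Dedekind $\sigma$-complete Riesz space $H$. By H\"older's theorem an archimedean totally ordered Riesz space embeds (as an ordered Riesz space) into $\R$; but this embedding need not be $\sigma$-continuous, and $H$ itself may fail to be a quotient $\frac{\R^Y}{\mathcal{J}}$ with $\mathcal{J}$ maximal. The cleaner route, and the one the statement's phrasing suggests, is to keep the factors un-collapsed: take $X=I$ with each coordinate carrying its own index set, or more precisely build $X$ as the disjoint sum $\coprod_{i\in I}X_i$ where for each $i$ we produce a set $X_i$, a $\sigma$-ideal $\mathcal{I}_i$ of subsets of $X_i$, and an injective $\sigma$-continuous Riesz morphism $G_i\hookrightarrow\frac{\R^{X_i}}{\mathcal{I}_i}$, then assemble these into $G\hookrightarrow\prod_i\frac{\R^{X_i}}{\mathcal{I}_i}\cong\frac{\R^X}{\mathcal{I}}$ where $\mathcal{I}=\{A\subseteq X\mid A\cap X_i\in\mathcal{I}_i\text{ for all }i\}$; one checks readily that $\mathcal{I}$ is a boolean $\sigma$-ideal and that the product of the quotients is canonically the quotient by $\mathcal{I}$ (again because the operations, including $\bigvee\limits^-$, are at most countably infinitary, so they are computed coordinatewise). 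The composite is injective since the family $(\pi_i)$ separates points, and $\sigma$-continuous since each leg is.

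The main obstacle is the per-factor step: producing, for a totally ordered archimedean Dedekind $\sigma$-complete Riesz space $H$, the data $(X_H,\mathcal{I}_H)$ together with an \emph{injective $\sigma$-continuous} Riesz embedding into $\frac{\R^{X_H}}{\mathcal{I}_H}$. Naive approaches — embedding $H$ into $\R$ by H\"older, or into $\R^H$ by the evaluation maps at all $\ell$-homomorphisms to $\R$ — are easily seen to give Riesz embeddings but the verification of $\sigma$-continuity is exactly the subtle point, since a countable supremum in $H$ need not map to the pointwise supremum of bounded real-valued functions unless the representing measure/ideal structure is chosen with care. This is what the Loomis--Sikorski machinery in the references supplies, and rather than reprove it I would invoke it as a black box for $H$ and then only verify the gluing bookkeeping (that $\mathcal{I}$ is a boolean $\sigma$-ideal, that $\prod_i\frac{\R^{X_i}}{\mathcal{I}_i}\cong\frac{\R^X}{\mathcal{I}}$, and that injectivity plus $\sigma$-continuity are inherited by the composite), all of which are routine given Remarks \ref{r. surj order} and \ref{r. sups are preserved by *} and Proposition \ref{p. frac is sigma-complete}.
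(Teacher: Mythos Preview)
The paper's own ``proof'' is a one-line citation to \cite{Nepalese}; no argument is given. Your proposal tries to supply structure on top of that citation, but in the end it also defers the essential step to the same references, so it does not constitute an independent proof. More importantly, you misidentify where the difficulty lies.

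You propose two stages: (i) produce a separating family of $\sigma$-continuous Riesz morphisms $\pi_i\colon G\to G_i$ with each $G_i$ totally ordered, archimedean, Dedekind $\sigma$-complete; (ii) embed each $G_i$ into some $\frac{\R^{X_i}}{\mathcal{I}_i}$ and glue. You call stage (ii) ``the main obstacle'' and say you would invoke the literature as a black box there. But for Riesz spaces stage (ii) is trivial: a nonzero totally ordered archimedean Riesz space is, by H\"older, a Riesz subspace of $\R$, and a nonzero vector subspace of $\R$ is all of $\R$. So each $G_i$ is either $\{0\}$ or isomorphic to $\R$, and the H\"older map is an isomorphism (hence certainly $\sigma$-continuous). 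Your worry that ``this embedding need not be $\sigma$-continuous'' does not arise in the Riesz setting.

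All of the content therefore sits in stage (i): the existence of enough $\sigma$-continuous Riesz homomorphisms onto $\R$ to separate points of $G$. That statement is equivalent to the theorem you are trying to prove (given your correct observation that $\prod_i\frac{\R^{X_i}}{\mathcal{I}_i}\cong\frac{\R^X}{\mathcal{I}}$), so deferring it to \cite{Nepalese} or \cite{LoomisRevisited,SmallRieszSpaces} is not a reduction at all---it is the theorem. Your gluing bookkeeping is fine, but it wraps a black box that already contains the result. If you want to go beyond what the paper does, you would need to actually exhibit the separating family of $\sigma$-ideals (e.g.\ via the Loomis--Sikorski construction on the Boolean $\sigma$-algebra of components, or via the Ogasawara--Maeda/Yosida representation combined with a Baire-category argument), which is the substance of the cited papers.
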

\begin{proof}
	See  \cite{Nepalese}. 
\end{proof}

\subsection{The variety $\VG$ is generated by $\R$}

\begin{thm}[Loomis-Sikorski Theorem for $\ell$-groups]\label{t. Nepalese for groups}
	Let $G$ be a Dedekind $\sigma$-complete  $\ell$-group. Then there exist a set $X$, a boolean $\sigma$-ideal $\mathcal{I}$  of subsets of $X$ and an injective $\sigma$-continuous $\ell$-morphism $\varphi\colon G\xhookrightarrow{} \frac{\R^X}{\mathcal{I}}$.
\end{thm}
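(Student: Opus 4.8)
The plan is to derive the $\ell$-group version of the Loomis--Sikorski theorem from the Riesz space version (Theorem \ref{t. Nepalese}) by a standard tensoring/scalar-extension argument, reducing the $\ell$-group case to the Riesz space case already in hand. Given a Dedekind $\sigma$-complete $\ell$-group $G$, recall that $G$ is automatically archimedean and hence abelian. First I would pass from $G$ to a Riesz space containing it: the divisible hull $\mathbb{Q}\otimes_{\mathbb{Z}}G$ carries a canonical $\ell$-group structure into which $G$ embeds as an $\ell$-subgroup, and then one completes to a Dedekind $\sigma$-complete Riesz space. The cleanest route is probably to take the Dedekind $\sigma$-completion $\widehat{G}$ of $G$ (which exists and is an archimedean $\ell$-group), observe that its divisible hull $\mathbb{R}\otimes$-type construction yields a Dedekind $\sigma$-complete Riesz space $\widetilde{G}$, and note that the composite $G \hookrightarrow \widetilde{G}$ is an injective $\sigma$-continuous $\ell$-morphism. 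Alternatively, and more elementarily, one can avoid completions: embed $G$ into the Riesz space $\mathbb{R}\otimes_{\mathbb{Z}} G$ (realized concretely, e.g., via the representation of archimedean $\ell$-groups), which need not itself be Dedekind $\sigma$-complete but can then be enlarged.

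The concrete steps I would carry out: (1) Using that $G$ is archimedean and abelian, fix an $\ell$-embedding of $G$ into some Dedekind $\sigma$-complete Riesz space $H$. The most self-contained way is to first apply the representation of archimedean $\ell$-groups to embed $G$ into an $\ell$-group of the form $\frac{\mathbb{R}^X}{\mathcal{I}}$ directly, but if we insist on routing through Theorem \ref{t. Nepalese} we instead build a Dedekind $\sigma$-complete Riesz space $H\supseteq G$: take the $\sigma$-ideal generated appropriately, or simply take $H$ to be the Dedekind $\sigma$-completion of the Riesz space generated by $G$ inside its divisible hull. (2) Apply Theorem \ref{t. Nepalese} to $H$, obtaining a set $X$, a boolean $\sigma$-ideal $\mathcal{I}$ of subsets of $X$, and an injective $\sigma$-continuous Riesz morphism $\psi\colon H\hookrightarrow \frac{\mathbb{R}^X}{\mathcal{I}}$. (3) Restrict $\psi$ along $G\hookrightarrow H$: the composite $\varphi\colon G\hookrightarrow H\xhookrightarrow{\psi}\frac{\mathbb{R}^X}{\mathcal{I}}$ is injective, is an $\ell$-morphism (forgetting scalar multiplication), and is $\sigma$-continuous because $G\hookrightarrow H$ preserves existing countable suprema (a $\sigma$-completion embedding does) and $\psi$ is $\sigma$-continuous. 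This gives exactly the desired conclusion.

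The main obstacle is step (1): showing that a Dedekind $\sigma$-complete $\ell$-group embeds, in a $\sigma$-continuous and order-preserving way, into a Dedekind $\sigma$-complete \emph{Riesz space}. The subtlety is that tensoring with $\mathbb{Q}$ or $\mathbb{R}$ does not obviously preserve Dedekind $\sigma$-completeness, so one cannot just scalar-extend; one must complete afterwards, and one must check that the original $\ell$-group embeds into the completion with existing countable suprema preserved. This is a known fact about archimedean $\ell$-groups (the Dedekind completion of an archimedean $\ell$-group is an archimedean $\ell$-group, and it is a vector lattice once one also completes with respect to the rational scalars, since an archimedean $\ell$-group that is Dedekind complete and divisible is automatically an $\mathbb{R}$-vector lattice), but assembling the precise statement needs a little care. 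Once step (1) is granted, steps (2)--(3) are immediate from Theorem \ref{t. Nepalese} and Remark \ref{r. sups are preserved by *}.

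Remark: an alternative that sidesteps (1) entirely is to prove Theorem \ref{t. Nepalese for groups} directly by the same method used for Theorem \ref{t. Nepalese} — a Yosida-type representation followed by a Loomis--Sikorski quotient — rather than reducing to the Riesz case; since the excerpt says the results of this section ``can be reduced to (or done in analogy with)'' earlier results, either route is legitimate, but the reduction via scalar extension is the shorter write-up provided the completion lemma in (1) is cited from \cite{BKW} or \cite{LuxZaan}.
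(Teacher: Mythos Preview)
Your proposal is correct and follows exactly the same three-step approach as the paper: embed $G$ into a Dedekind $\sigma$-complete Riesz space $H$ via an injective $\sigma$-continuous $\ell$-morphism, apply Theorem~\ref{t. Nepalese} to $H$, and compose. The only difference is that where you identify step~(1) as the ``main obstacle'' and sketch a tensoring-plus-completion construction, the paper dispatches it in one line by citing \cite{EmbeddingGroupInRiesz} (Lepellere--Valente) for the existence of such an embedding; your intuition that this step is the nontrivial ingredient, and that it should be outsourced to the literature, is spot on.
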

\begin{proof}
	There exists a Dedekind $\sigma$-complete Riesz space $H$ and an injective $\sigma$-continuous $\ell$-morphism $\iota\colon G\xhookrightarrow{} H$; see, e.g., \cite{EmbeddingGroupInRiesz}. Applying Theorem \ref{t. Nepalese} to the Dedekind $\sigma$-complete Riesz space $H$, we obtain an injective $\sigma$-continuous Riesz morphism $\varphi'\colon H\xhookrightarrow{} \frac{\R^X}{\mathcal{I}}$. The composition $\varphi=\varphi'\circ\iota\colon G\xhookrightarrow{} \frac{\R^X}{\mathcal{I}}$ is an injective $\sigma$-continuous $\ell$-morphism, since both $\iota$ and $\varphi'$ are injective $\sigma$-continuous $\ell$-morphisms.
\end{proof}

\begin{rem}\label{r. two ways without u}
	$\frac{\R^X}{\mathcal{I}}$ may be thought of as an object of $\VG$ in two ways:
	\begin{enumerate}
		\item \label{i. first way without u}$\frac{\R^X}{\mathcal{I}}$ inherits from $\R^X$ the structure of $\ell$-group. By Proposition \ref{p. frac is sigma-complete}, with such structure $\frac{\R^X}{\mathcal{I}}$ is Dedekind $\sigma$-complete. Hence $\frac{\R^X}{\mathcal{I}}\in\G$, and $F\left(\frac{\R^X}{\mathcal{I}}\right)\in\VG$.
		\item Since $\R\in\VG$, and $\sim_\mathcal{I}$ is a congruence in the language of $\VG$, $\frac{\R^X}{\mathcal{I}}$ inherits a structure of object of $\VGu$ as a quotient of a power of $\R$. We will denote this object by $\frac{F(\R)^X}{\mathcal{I}}$.
	\end{enumerate}
\end{rem}

\begin{lem}\label{l. quotient of power}
	$F\left(\frac{\R^X}{\mathcal{I}}\right)=\frac{F(\R)^X}{\mathcal{I}}$, i.e.\ $F\left(\frac{\R^X}{\mathcal{I}}\right)$ and $\frac{F(\R)^X}{\mathcal{I}}$ identify the same object of $\VG$.
\end{lem}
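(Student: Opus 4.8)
The plan is to unwind both sides to the level of the underlying $\ell$-group with its $\bigvee\limits^-$-operation and check that the interpretation of $\bigvee\limits^-$ coincides. Both $F\left(\frac{\R^X}{\mathcal{I}}\right)$ and $\frac{F(\R)^X}{\mathcal{I}}$ have the same underlying set, namely the quotient $\frac{\R^X}{\mathcal{I}}$, and the same $\ell$-group operations (inherited from $\R^X$), so the only thing to verify is that the two candidate interpretations of $\bigvee\limits^-$ agree. Thus the statement reduces to the following equality in $\frac{\R^X}{\mathcal{I}}$, for every $([g],[f_1],[f_2],\dots)$: the element $\sup_{n\geq 1}\{[f_n]\land [g]\}$ computed in the Dedekind $\sigma$-complete $\ell$-group $\frac{\R^X}{\mathcal{I}}$ (this is the definition of $F$ applied to the object of $\G$ described in Remark \ref{r. two ways without u}\eqref{i. first way without u}) equals $\left[\bigvee\limits_{n\geq 1}^g f_n\right] = \left[\sup_{n\geq 1}\{f_n\land g\}\right]$, the latter being the pointwise supremum in $\R^X$ pushed through the quotient map (this is how $\bigvee\limits^-$ is interpreted in $\frac{F(\R)^X}{\mathcal{I}}$, since $\R\in\VG$ interprets $\bigvee\limits^-$ pointwise and congruences in the language of $\VG$ commute with $\bigvee\limits^-$).

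First I would record that in $\R^X$ the element $\sup_{n\geq 1}\{f_n\land g\}$ is just the pointwise supremum, which exists because the family is bounded above by $g$. Then I would invoke Remark \ref{r. sups are preserved by *}: the quotient map $[-]\colon\R^X\twoheadrightarrow\frac{\R^X}{\mathcal{I}}$ is $\sigma$-continuous, so it carries this pointwise supremum to $\sup_{n\geq 1}\{[f_n\land g]\} = \sup_{n\geq 1}\{[f_n]\land[g]\}$ computed in $\frac{\R^X}{\mathcal{I}}$. But that last expression is precisely $\bigvee\limits_{n\geq 1}^{[g]}[f_n]$ as defined by the functor $F$ on the object $\frac{\R^X}{\mathcal{I}}\in\G$, i.e.\ the interpretation of $\bigvee\limits^-$ in $F\left(\frac{\R^X}{\mathcal{I}}\right)$. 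On the other side, $\left[\bigvee\limits_{n\geq 1}^g f_n\right]$ is by definition the interpretation of $\bigvee\limits^-$ in $\frac{F(\R)^X}{\mathcal{I}}$ on the same arguments. Hence the two interpretations coincide, and since everything else agrees, the two objects of $\VG$ are literally the same.

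The only genuine subtlety — and the step I would be most careful about — is making sure the two descriptions of $\bigvee\limits^-$ being compared are correctly identified: on one hand $\bigvee\limits^-$ via $F$ means ``the actual supremum in the Dedekind $\sigma$-complete $\ell$-group $\frac{\R^X}{\mathcal{I}}$'', while on the other hand $\bigvee\limits^-$ via the quotient-of-a-power construction means ``apply the pointwise $\bigvee\limits^-$ of $\R$ coordinatewise, then quotient''. The bridge between them is exactly the $\sigma$-continuity of the quotient map from Remark \ref{r. sups are preserved by *}, together with the fact that the pointwise $\bigvee\limits^-$ in $\R^X$ really does compute the supremum of $(f_n\land g)_{n\geq 1}$ in $\R^X$. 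Once these identifications are in place the proof is a one-line diagram chase; there is no hard calculation, only bookkeeping of definitions.

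\begin{proof}
	Both $F\left(\frac{\R^X}{\mathcal{I}}\right)$ and $\frac{F(\R)^X}{\mathcal{I}}$ have $\frac{\R^X}{\mathcal{I}}$ as underlying set and carry the $\ell$-group operations inherited from $\R^X$; so it is enough to check that the interpretations of $\bigvee\limits^-$ coincide. Fix $[g],[f_1],[f_2],\dots$ in $\frac{\R^X}{\mathcal{I}}$. In $F\left(\frac{\R^X}{\mathcal{I}}\right)$ the element $\bigvee\limits_{n\geq 1}^{[g]}[f_n]$ is, by definition of $F$, the supremum $\sup_{n\geq 1}\{[f_n]\land[g]\}$ taken in the Dedekind $\sigma$-complete $\ell$-group $\frac{\R^X}{\mathcal{I}}$. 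In $\R^X$ the family $(f_n\land g)_{n\geq 1}$ is bounded above by $g$, and its supremum is the pointwise supremum, which we denote $s\in\R^X$; thus $s=\bigvee\limits_{n\geq 1}^g f_n$ holds in $F(\R)^X$, computing $\bigvee\limits^-$ coordinatewise as in $\R$. By Remark \ref{r. sups are preserved by *} the map $[-]\colon\R^X\twoheadrightarrow\frac{\R^X}{\mathcal{I}}$ is $\sigma$-continuous, so
	$$
	\sup_{n\geq 1}\{[f_n]\land[g]\}=\sup_{n\geq 1}\{[f_n\land g]\}=[s]=\left[\bigvee\limits_{n\geq 1}^g f_n\right].
	$$
	The right-hand side is, by construction of $\frac{F(\R)^X}{\mathcal{I}}$ as a quotient of a power of $\R$ in the language of $\VG$, the interpretation of $\bigvee\limits_{n\geq 1}^{[g]}[f_n]$ in $\frac{F(\R)^X}{\mathcal{I}}$. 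Hence the two interpretations of $\bigvee\limits^-$ agree, and the two structures are the same object of $\VG$.
\end{proof}
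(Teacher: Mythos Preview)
Your proof is correct and follows essentially the same approach as the paper: both identify that the underlying $\ell$-group structures agree, reduce the question to comparing the two interpretations of $\bigvee\limits^-$, and bridge them via the $\sigma$-continuity of the quotient map $[-]\colon\R^X\twoheadrightarrow\frac{\R^X}{\mathcal{I}}$ (Remark~\ref{r. sups are preserved by *}) together with the fact that the supremum of $(f_n\land g)_{n\geq 1}$ in $\R^X$ is computed pointwise and hence coincides with the coordinatewise $\bigvee\limits^-$ in $F(\R)^X$. The paper spells out the pointwise verification $(\sup_{n\geq 1}\{f_n\land g\})(x)=\sup_{n\geq 1}\{f_n(x)\land g(x)\}$ a bit more explicitly, but the argument is the same.
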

\begin{proof}
	The operations of $\ell$-groups are the same in $F\left(\frac{\R^X}{\mathcal{I}}\right)$ and $\frac{F(\R)^X}{\mathcal{I}}$ because in both cases they are defined in the standard way (for example $[f]\lor [g]\coloneqq[f\lor g]$).
	
	Let us see how the operation $\bigvee\limits^-$ is defined in $F\left(\frac{\R^X}{\mathcal{I}}\right)$. Let $([f_n])_{n\geq 1  }\subseteq F\left(\frac{\R^X}{\mathcal{I}}\right),[g]\in F\left(\frac{\R^X}{\mathcal{I}}\right)$.
	$$\bigvee_{n\geq 1  }^{[g] }[f_n]\stackrel{\text{def. of }F}{=}\sup_{n\geq 1  }\{[f_n]\land [g] \}=\sup_{n\geq 1  }\{[f_n\land g] \}\stackrel{\text{Rem. }\ref{r. sups are preserved by *}}{=}\left[\sup_{n\geq 1  }\{f_n\land g \}\right].$$
	We further know $(\sup_{n\geq 1  }\{f_n\land g \}) (x)=\sup_{n\geq 1  }\{f_n(x)\land g(x) \}$, since the order in the $\ell$-group $\R^X$ is defined by $f\leq g\Leftrightarrow f\lor g=g$, and $(f\lor g)(x)\coloneqq f(x)\lor g(x)$.
	
	In $\frac{F(\R)^X}{\mathcal{I}}$, $\bigvee\limits^-$ is defined as follows.
	$$\bigvee_{n\geq 1  }^{[g] }[f_n]=\left[\bigvee_{n\geq 1  }^{g }f_n\right].$$
	We know $\left(\bigvee\limits_{n\geq 1  }^{g }f_n\right)(x)=\bigvee\limits_{n\geq 1  }^{g(x)}f_n(x)=\sup_{n\geq 1  }\{f_n(x)\land g(x) \}$.
	
	We have shown that the operation $\bigvee\limits^-$ is the same in $F\left(\frac{\R^X}{\mathcal{I}}\right)$ and $\frac{F(\R)^X}{\mathcal{I}}$.
\end{proof}

\begin{thm}\label{t. generation v}
	The variety $\VG$ is generated by $\R$.
\end{thm}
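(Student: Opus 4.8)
The plan is to show that $\VG$ equals the quasi-variety (equivalently, by Birkhoff's theorem in the infinitary setting, the variety) generated by $\R$ by producing, for an arbitrary $G \in \VG$, an embedding of $G$ into a product of copies of $\R$ inside $\VG$. Concretely, let $G \in \VG$. By Theorem~\ref{t. U and F are inverse functors} we may identify $G$ with $F(U(G))$, i.e.\ with a Dedekind $\sigma$-complete $\ell$-group $U(G)$ equipped with its canonical $\bigvee\limits^-$ operation. Apply the Loomis--Sikorski theorem for $\ell$-groups (Theorem~\ref{t. Nepalese for groups}) to $U(G)$: this yields a set $X$, a boolean $\sigma$-ideal $\mathcal{I}$ of subsets of $X$, and an injective $\sigma$-continuous $\ell$-morphism $\varphi\colon U(G) \hookrightarrow \frac{\R^X}{\mathcal{I}}$.

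The next step is to promote $\varphi$ to a morphism in $\VG$. Since $\varphi$ is a $\sigma$-continuous $\ell$-morphism, it is a morphism in $\G$, so $F(\varphi)\colon F(U(G)) \to F\!\left(\frac{\R^X}{\mathcal{I}}\right)$ is a morphism in $\VG$, and it is still injective. Now Lemma~\ref{l. quotient of power} identifies $F\!\left(\frac{\R^X}{\mathcal{I}}\right)$ with $\frac{F(\R)^X}{\mathcal{I}}$, which by Remark~\ref{r. two ways without u}(2) is a quotient of the power $F(\R)^X$ in the variety $\VG$. Composing, we obtain an injective morphism $G \hookrightarrow \frac{F(\R)^X}{\mathcal{I}}$ in $\VG$, and the quotient map $F(\R)^X \twoheadrightarrow \frac{F(\R)^X}{\mathcal{I}}$ is a surjective morphism in $\VG$. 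Thus $G$ is a subobject of a quotient of a power of $\R$ in $\VG$; since $\VG$ is a variety, it is closed under these constructions, and the subvariety generated by $\R$ is closed under $\mathrm{HSP}$, so $G$ lies in the variety generated by $\R$. As $G$ was arbitrary and $\R \in \VG$, the two varieties coincide.

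The only point requiring a little care is that the embedding we produced lands in a \emph{homomorphic image} $\frac{F(\R)^X}{\mathcal{I}}$ of a power of $\R$, not in a power of $\R$ itself, so we genuinely need closure under $H$; this is why the conclusion is phrased as ``generated as a variety'' here, with the sharper quasi-variety statement (Corollary~\ref{c. generation as quasi-variety}) deferred — that stronger statement will instead require exhibiting $\frac{\R^X}{\mathcal{I}}$ itself as an object of the quasi-variety generated by $\R$, e.g.\ by a further embedding argument. For the present theorem, the main obstacle is purely the bookkeeping of identifying the two $\VG$-structures on $\frac{\R^X}{\mathcal{I}}$, and that has already been handled by Lemma~\ref{l. quotient of power}; the rest is assembling the morphisms in the correct order and invoking closure of varieties under $\mathrm{HSP}$.
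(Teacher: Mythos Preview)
Your proof is correct and follows essentially the same approach as the paper: apply the Loomis--Sikorski theorem for $\ell$-groups to $U(G)$, promote the resulting embedding to $\VG$ via the functor $F$, and invoke Lemma~\ref{l. quotient of power} to recognize the target as a quotient of a power of $\R$ in $\VG$. Your opening sentence slightly overstates the plan (you embed into a quotient of a power, not a power), but you correct this yourself in the final paragraph, so there is no gap.
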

\begin{proof}
	Let $G\in \VG$. $U(G)\in\G$. By Theorem \ref{t. Nepalese for groups}  we have an injective $\sigma$-continuous $\ell$-morphism  $\varphi\colon U(G)\xhookrightarrow{} \frac{\R^X}{\mathcal{I}}$. $U(G)\in\G, \frac{\R^X}{\mathcal{I}}\in\G$ (as shown in Item \eqref{i. first way without u} in Remark \ref{r. two ways without u}) , and $\varphi$ is a morphism in $\G$. We can therefore apply the functor $F$ to $\varphi$, obtaining an injective morphism in $\VG$
	$$\underbrace{F(\varphi)}_{=\varphi}\colon \underbrace{F(U(G))}_{=G}\xhookrightarrow{} F\left(\frac{\R^X}{\mathcal{I}}\right)$$
	We see that $F(\varphi)$ injects $G$ in $F\left(\frac{\R^X}{\mathcal{I}}\right)$, which, by Lemma \ref{l. quotient of power}, is a quotient of a power of $\R$ in $\VG$.
\end{proof}

\subsection{The variety $\VGu$ is generated by $\R$}\label{S. generation groups u}

\begin{lem}\label{l. can be chosen positive}
	Let $\varphi\colon G\twoheadrightarrow H$ be a surjective $\ell$-morphism. For $h\in H^+$, there exists an element $g\in G^+$ such that $h=\varphi(g)$.
\end{lem}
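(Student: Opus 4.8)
The statement to prove is Lemma~\ref{l. can be chosen positive}: given a surjective $\ell$-morphism $\varphi\colon G\twoheadrightarrow H$ and $h\in H^+$, there is $g\in G^+$ with $\varphi(g)=h$.

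The plan is to start with an arbitrary preimage and then replace it by its positive part. Since $\varphi$ is surjective, pick $g_0\in G$ with $\varphi(g_0)=h$. Set $g\coloneqq g_0^+ = g_0\lor 0$. Then $g\geq 0$ by construction, so $g\in G^+$. It remains to check $\varphi(g)=h$. Since $\varphi$ is an $\ell$-morphism, it preserves $\lor$ and $0$, hence $\varphi(g)=\varphi(g_0\lor 0)=\varphi(g_0)\lor\varphi(0)=h\lor 0=h^+$. Finally, because $h\in H^+$, i.e.\ $h\geq 0$, we have $h^+=h$, so $\varphi(g)=h$ as required.

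There is essentially no obstacle here: the argument is a one-line consequence of the fact that $\ell$-morphisms preserve the lattice operations together with the observation that the positive part of an already-nonnegative element is itself. The only thing to be mindful of is making explicit that $g_0^+=g_0\lor 0$ lies in $G^+$ and that $\varphi$ commutes with the formation of positive parts, both of which are immediate from the definition of $\ell$-morphism.

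Concretely I would write: \emph{By surjectivity of $\varphi$, choose $g_0\in G$ with $\varphi(g_0)=h$. Put $g\coloneqq g_0\lor 0\in G^+$. Since $\varphi$ preserves $\lor$ and $0$, $\varphi(g)=\varphi(g_0)\lor 0=h\lor 0=h$, the last equality because $h\geq 0$.} This completes the proof.
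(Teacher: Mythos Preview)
Your proof is correct and takes essentially the same approach as the paper: pick any preimage by surjectivity, replace it by its positive part, and use that $\varphi$ commutes with $(\cdot)^+$ to conclude. The only cosmetic difference is that you spell out $\varphi(g_0^+)=\varphi(g_0)^+$ via preservation of $\lor$ and $0$, whereas the paper states it directly.
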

\begin{proof}
	Let $f$ be such that $\varphi(f)=h$. Then $\varphi(f^+)=\varphi(f)^+=h^+=h$. Thus $g\coloneqq f^+$ satisfies the desired properties.
\end{proof}

\begin{thm}[Loomis-Sikorski Theorem for $\ell$-groups with weak unit]\label{t. Nepalese for groups unit}
	Let $G$ be a Dedekind $\sigma$-complete  $\ell$-group with weak  unit. Then there exist a set $X$, a boolean $\sigma$-ideal $\mathcal{I}$ of subsets of $X$ and an injective $\sigma$-continuous $\ell$-morphism $\varphi\colon G\xhookrightarrow{} \frac{\R^X}{\mathcal{I}}$ such that $\varphi(1_G)=[1_{\R^X}]$.
\end{thm}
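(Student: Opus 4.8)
The plan is to reduce to the unit-free Loomis--Sikorski theorem (Theorem \ref{t. Nepalese for groups}) and then to \emph{normalise} the image of the weak unit by restricting the index set and rescaling. First I would apply Theorem \ref{t. Nepalese for groups} to $G$, obtaining a set $X$, a (boolean) $\sigma$-ideal $\mathcal I$ of subsets of $X$, and an injective $\sigma$-continuous $\ell$-morphism $\psi\colon G\hookrightarrow\frac{\R^X}{\mathcal I}$. Since $1_G\geq 0$ and $\psi$ is order-preserving, $\psi(1_G)\geq 0$, so by Lemma \ref{l. can be chosen positive} applied to the surjection $[-]\colon\R^X\twoheadrightarrow\frac{\R^X}{\mathcal I}$ we may choose a pointwise nonnegative representative $u\in(\R^X)^+$ with $[u]=\psi(1_G)$. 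Put $X'\coloneqq\{x\in X\mid u(x)>0\}$.

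The key step is the claim that for every $g\in G^+$ the element $\psi(g)$ admits a representative vanishing on $X\setminus X'$. Indeed, Proposition \ref{p. axiom for 1} gives $g=\sup_{n\geq 1}\{g\land n1_G\}$ in $G$; applying the $\sigma$-continuous $\ell$-morphism $\psi$ and using that $\psi$ preserves $\land$ and is additive yields $\psi(g)=\sup_{n\geq1}\{\psi(g)\land n[u]\}$. Choosing (again by Lemma \ref{l. can be chosen positive}) a representative $h\in(\R^X)^+$ of $\psi(g)$, the family $(h\land nu)_{n\geq1}$ is bounded above by $h$ in $\R^X$, so its supremum there is the pointwise one and equals the function $s$ with $s(x)=h(x)$ for $x\in X'$ and $s(x)=0$ for $x\notin X'$ (here $h\geq 0$ is used: if $u(x)>0$ then $nu(x)\to+\infty$, and if $u(x)=0$ then $h(x)\land nu(x)=0$). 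By Remark \ref{r. sups are preserved by *} the quotient map preserves this supremum, so $\psi(g)=[s]$ with $s$ supported on $X'$.

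Next I would restrict everything to $X'$. Let $\mathcal I'\coloneqq\{A\in\mathcal I\mid A\subseteq X'\}$, a $\sigma$-ideal of subsets of $X'$, and let $\bar\rho\colon\frac{\R^X}{\mathcal I}\to\frac{\R^{X'}}{\mathcal I'}$ be induced by restriction of functions; it is a well-defined surjective $\ell$-morphism, hence $\sigma$-continuous by Remark \ref{r. surj order}. Set $\psi'\coloneqq\bar\rho\circ\psi$, a $\sigma$-continuous $\ell$-morphism. It is still injective: being an $\ell$-morphism it suffices to check $\psi'(g)=0\Rightarrow g=0$ for $g\in G^+$, and for such $g$ the claim gives $\psi(g)=[s]$ with $s$ supported on $X'$, so $\psi'(g)=0$ forces $\{x\in X'\mid s(x)\neq0\}\in\mathcal I$, whence $\{x\in X\mid s(x)\neq0\}\in\mathcal I$, i.e.\ $\psi(g)=0$ and $g=0$ by injectivity of $\psi$. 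Finally, since $u$ is strictly positive at every point of $X'$, pointwise division by $u|_{X'}$ is an $\ell$-group automorphism of $\R^{X'}$ which descends to an $\ell$-isomorphism $\bar D\colon\frac{\R^{X'}}{\mathcal I'}\to\frac{\R^{X'}}{\mathcal I'}$, $[f]\mapsto[f/u|_{X'}]$ (with inverse $[f]\mapsto[f\cdot u|_{X'}]$), which is $\sigma$-continuous because it is an order-isomorphism. Then $\varphi\coloneqq\bar D\circ\psi'\colon G\hookrightarrow\frac{\R^{X'}}{\mathcal I'}$ is an injective $\sigma$-continuous $\ell$-morphism with $\varphi(1_G)=\bar D([u|_{X'}])=[1_{\R^{X'}}]$; renaming $X'$ as $X$ and $\mathcal I'$ as $\mathcal I$ gives the statement.

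I expect the main obstacle to be the key step: recognising that positivity of the weak unit (through Proposition \ref{p. axiom for 1}) forces the whole image of $G$ to ``live on'' $\{u>0\}$, which is exactly what makes the passage to $X'$ legitimate and simultaneously makes division by $u$ possible there. Everything else --- that restriction and pointwise division induce $\sigma$-continuous $\ell$-morphisms, and that injectivity survives --- is routine bookkeeping with the quotient $\frac{\R^X}{\mathcal I}$.
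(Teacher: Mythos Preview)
Your proof is correct and follows the same architecture as the paper's: apply the unit-free Loomis--Sikorski theorem, restrict to the positivity set $\{u>0\}$ of a chosen positive representative of $\psi(1_G)$, and then rescale pointwise by $1/u$ to send the image of $1_G$ to the constant function $1$. The only substantive difference is in how injectivity of the restricted map is established. You first prove, via Proposition~\ref{p. axiom for 1} and the $\sigma$-continuity of $\psi$, an auxiliary ``support'' claim (every $\psi(g)$ with $g\geq 0$ has a representative supported on $X'$), and deduce injectivity from that. The paper bypasses this intermediate step and argues directly from the defining property of the weak unit: if $(\rho\circ\psi)(g)=0$ it computes that $[\,u\land |f|\,]=0$ for a representative $f$ of $\psi(g)$, hence $\psi(1_G\land|g|)=0$, hence $1_G\land|g|=0$, hence $g=0$. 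Your route is a little longer but yields the pleasant extra information that the whole image of $G^+$ already lives, at the level of representatives, on $\{u>0\}$; the paper's route is shorter but uses the weak-unit axiom in its raw form rather than through Proposition~\ref{p. axiom for 1}.
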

\begin{proof}
	By Theorem \ref{t. Nepalese for groups}, we have an injective $\sigma$-continuous $\ell$-morphism $\psi\colon G\xhookrightarrow{} \frac{\R^Y}{\mathcal{J}}$.
	
	Since $1_G\geq 0$, we have $\psi(1_G)\geq 0$. By Lemma \ref{l. can be chosen positive}, we can choose an element $u\in \left(\R^Y\right)^+$ such that $[u]_\mathcal{J}=\psi(1_G)$. Define $X\coloneqq\{x\in X\mid  u(x)> 0\}$. Define $\mathcal{I}\coloneqq\{J\cap X \mid J\in\mathcal{J} \}=\{J\subseteq X \mid J\in\mathcal{J} \}$. Note that $\mathcal{I}$ is a $\sigma$-ideal of subsets of $X$. Define the map $-_{\mid X}\colon\R^Y\twoheadrightarrow \R^X$ by $f\mapsto f_{\vert X}$. Note that $-_{\mid X}$ is a surjective $\ell$-morphism.
	\[ 
	\begin{tikzpicture}[node distance=1.7 cm, auto]
	\node (11) {$\R^Y$};
	\node (12) [right of=11]{$\R^X$};
	\node (21) [below of=11]{$\frac{\R^Y}{\mathcal{J}}$};
	\node (22) [right of=21]{$\frac{\R^X}{\mathcal{I}}$};
	
	{\draw[->>] (11) to node {$-_{\mid X}$}  (12);}
	{\draw[->>] (11) to node [swap]{$[-]_\mathcal{J}$}  (21);}
	{\draw[->>] (12) to node {$[-]_\mathcal{I}$}  (22);}
	\end{tikzpicture}
	\]
	We have	$ker\left([-]_\mathcal{J}\right)\subseteq ker\left([-]_\mathcal{I}\circ r\right)$. Indeed, let $f\in ker\left([-]_\mathcal{J}\right)$. Then $\{y\in Y\mid  f(y)\neq0\}\in \mathcal{J}$. Then $\{x\in X\mid  f_{\vert X}(x)\neq0\}=\{y\in Y\mid  f(y)\neq0\}\cap X\in \{J\cap X \mid J\in\mathcal{J} \}=\mathcal{I}$. Hence $[f_{\vert X}]_\mathcal{I}=0$, i.e.\ $f\in ker\left([-]_\mathcal{I}\circ -_{\mid X}\right)$.
	
	Since $ker\left([-]_\mathcal{J}\right)\subseteq ker\left([-]_\mathcal{I}\circ -_{\mid X}\right)$, by the universal property of the quotient there exists a unique $\ell$-morphism $\rho\colon \frac{\R^Y}{\mathcal{J}}\to \frac{\R^X}{\mathcal{I}}$ such that the following diagram commutes.
	\[ 
	\begin{tikzpicture}[node distance=1.7 cm, auto]
	\node (11) {$\R^Y$};
	\node (12) [right of=11]{$\R^X$};
	\node (21) [below of=11]{$\frac{\R^Y}{\mathcal{J}}$};
	\node (22) [right of=21]{$\frac{\R^X}{\mathcal{I}}$};
	
	{\draw[->>] (11) to node {$-_{\mid X}$}  (12);}
	{\draw[->>] (11) to node [swap]{$[-]_\mathcal{J}$}  (21);}
	{\draw[->>] (12) to node {$[-]_\mathcal{I}$}  (22);}
	{\draw[->, dashed] (21) to node {$\rho$}  (22);}
	\end{tikzpicture}
	\]
	
	Being $-_{\mid X} $ and $[-]_{\mathcal{I}}$ surjective, $\rho$ must be surjective. By Remark \ref{r. surj order},	$\rho$ preserves the existing countable suprema.
	
	Define the map $m\colon \R^X\xrightarrow{\sim} \R^X$ by $(m(f))(x)=\frac{1}{u(x)}f(x)$. Note that, by definition of $X$, $u(x)>0$ for every $x\in X$, and therefore the map is well defined.	For every $\lambda\in\R$, $\lambda>0$, the map $m_{\lambda}\colon \R\xrightarrow{\sim} \R$ defined by $g\mapsto\lambda g$ is an $\ell$-isomorphism. This suggests that $m$ is an $\ell$-isomorphism. In fact, its inverse is the $\ell$-morphism $m^{-1}\colon \R^X\xrightarrow{\sim} \R^X$ defined by $(m^{-1}(g))(x)=u(x)g(x)$.
	\[ 
	\begin{tikzpicture}[node distance=1.7 cm, auto]
	\node (11) {$\R^X$};
	\node (12) [right of=11]{$\R^X$};
	\node (21) [below of=11]{$\frac{\R^X}{\mathcal{I}}$};
	\node (22) [right of=21]{$\frac{\R^X}{\mathcal{I}}$};
	
	{\draw[right hook->>] (11) to node {$m$}  (12);}
	{\draw[->>] (11) to node [swap]{$[-]_\mathcal{I}$}  (21);}
	{\draw[->>] (12) to node {$[-]_\mathcal{I}$}  (22);}
	\end{tikzpicture}
	\]
	
	Since $[-]_\mathcal{J}$ and $[-]_\mathcal{I}\circ m$ are surjective $\ell$-morphism with same kernel, using the universal property of quotients we obtain that there exists an $\ell$-isomorphism $\eta\colon\frac{\R^X}{\mathcal{I}}\xrightarrow{\sim}\frac{\R^X}{\mathcal{I}}$ which makes the following diagram commute.
	\[ 
	\begin{tikzpicture}[node distance=1.7 cm, auto]
	\node (11) {$\R^X$};
	\node (12) [right of=11]{$\R^X$};
	\node (21) [below of=11]{$\frac{\R^X}{\mathcal{I}}$};
	\node (22) [right of=21]{$\frac{\R^X}{\mathcal{I}}$};
	
	{\draw[right hook->>] (11) to node {$m$}  (12);}
	{\draw[->>] (11) to node [swap]{$[-]_\mathcal{I}$}  (21);}
	{\draw[->>] (12) to node {$[-]_\mathcal{I}$}  (22);}
	{\draw[right hook->>, dashed] (21) to node {$\eta$}  (22);}
	\end{tikzpicture}
	\]
	
	We have the following diagram.
	\[ 
	\begin{tikzpicture}[node distance=1.7 cm, auto]
	\node (10) {};
	\node (20) [below of=10] {$G$};
	\node (11) [right of=10] {$\R^Y$};
	\node (12) [right of=11]{$\R^X$};
	\node (21) [below of=11]{$\frac{\R^Y}{\mathcal{J}}$};
	\node (22) [right of=21]{$\frac{\R^X}{\mathcal{I}}$};
	\node (13) [right of=12]{$\R^X$};
	\node (23) [right of=22]{$\frac{\R^X}{\mathcal{I}}$};
	
	{\draw[right hook->] (20) to node {$\psi$}  (21);}
	{\draw[->>] (11) to node {$-_{\mid X} $}  (12);}
	{\draw[->>] (11) to node [swap]{$[-]_\mathcal{J}$}  (21);}
	{\draw[->>] (12) to node {$[-]_\mathcal{I}$}  (22);}
	{\draw[->>] (21) to node {$\rho$}  (22);}
	
	{\draw[right hook->>] (12) to node {$m$}  (13);}
	{\draw[->>] (13) to node {$[-]_\mathcal{I}$}  (23);}
	{\draw[right hook->>] (22) to node {$\eta$}  (23);}
	\end{tikzpicture}
	\]
	
	Define $\varphi\coloneqq\eta\circ\rho\circ\psi\colon G\to\frac{\R^X}{\mathcal{I}}$. Then $\varphi$ has the desired properties. Indeed,
		\begin{enumerate}
			\item $\varphi$ is a $\sigma$-continuous $\ell$-morphism since $\eta$, $\rho$ and $\psi$ are such.
			\item We show that $\varphi$ is injective. In fact, being $\eta$ an $\ell$-isomorphism, it is enough to show that $\rho\circ\psi$ is injective. Let $g\in G$ be such that $(\rho\circ \psi)(g)=0$. Let $f\in\R^Y$ such that $[f]_\mathcal{J}=\psi(g)$. Then $[f_{\mid X}]_\mathcal{I}=\rho([f]_\mathcal{J})=\rho(\psi(g))=0$. This means $I\coloneqq\{x\in X\mid  f(x)\neq0\}=\{x\in X\mid  f_{\mid X}(x)\neq0\}\in\mathcal{I}$.
			
			$\{y\in Y\mid (u\land \lvert f\rvert)(y)\neq 0 \}=\{y\in Y\mid u(y)\neq 0\}\cap\{y\in Y\mid f(y)\neq 0 \}=X\cap\{y\in Y\mid f(y)\neq 0\}=\{x\in X\mid f(x)\neq 0\}=I\in \mathcal{I}\subseteq \mathcal{J}$. Hence $0=[u\land \lvert f \rvert]_\mathcal{J}=[u]_\mathcal{J}\land \lvert [f]_\mathcal{J}\rvert=\psi(1_G)\land \lvert \psi(g)\rvert=\psi(1_G\land \lvert g\rvert)$.
			By injectivity of $\psi$, it follows that $1_G\land \lvert g\rvert=0$, hence $\vert g\rvert =0$, hence $g=0$.

			\item $\varphi(1_G)=[1_{\R^X}]_\mathcal{I}$. Indeed 
			$$\varphi(1_G)=\eta\rho\psi(1_G)=\eta\rho([u]_\mathcal{J})=[m(r(u))]_\mathcal{I}=[m(u_{\vert X})]_\mathcal{I}$$
			$$(m(u_{\vert X}))(x)\stackrel{\text{def. of }m}{=}\frac{1}{u(x)}u(x)=1$$
			Hence $m(u_{\vert X})=1_{\R^X}$ and $\varphi(1_G)=[1_{\R^X}]_\mathcal{I}$.
		\end{enumerate}	
\end{proof}

\begin{lem}\label{l. 1 is weak unit}
	$[1_{\R^X}]$ is a weak  unit for $\frac{\R^X}{\mathcal{I}}$. 
\end{lem}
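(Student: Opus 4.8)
The plan is to verify directly that $[1_{\R^X}]$ satisfies Definition \ref{d. weak unit}. First I would note that $[1_{\R^X}] \geq 0$, since $1_{\R^X}$ is the constant function $1$, which is nonnegative in $\R^X$, and the quotient map $[-]_{\mathcal{I}} \colon \R^X \twoheadrightarrow \frac{\R^X}{\mathcal{I}}$ is an $\ell$-morphism, hence order-preserving. It remains to show that for $[f] \in \frac{\R^X}{\mathcal{I}}$, if $[f] \land [1_{\R^X}] = 0$, then $[f] = 0$.

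The key observation is that meets in $\frac{\R^X}{\mathcal{I}}$ are computed pointwise on representatives: $[f] \land [1_{\R^X}] = [f \land 1_{\R^X}]$, and $(f \land 1_{\R^X})(x) = f(x) \land 1$ for each $x \in X$. So the hypothesis $[f] \land [1_{\R^X}] = 0$ says exactly that $A \coloneqq \{x \in X \mid f(x) \land 1 \neq 0\} \in \mathcal{I}$. Now I would argue that, modulo the $\sigma$-ideal $\mathcal{I}$, $f$ is equivalent to its positive part and in fact the set where $f$ is nonzero is contained in $A$ up to a set in $\mathcal{I}$. More carefully: on the complement of $A$ we have $f(x) \land 1 = 0$ for all $x$, which forces $f(x) \leq 0$; so $[f]$ agrees with $[-f^-] = -[f^-]$ outside a set in $\mathcal{I}$, meaning $[f] = -[f^-]$ and in particular $[f] \leq 0$. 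Replacing $f$ by $f^-$ reduces to the case $f \geq 0$ (i.e.\ $f$ takes values in $\R^+$), where $f(x) \land 1 = 0$ on $X \setminus A$ gives $f(x) = 0$ there, hence $\{x \in X \mid f(x) \neq 0\} \subseteq A \in \mathcal{I}$, so $[f] = 0$.

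Concretely I would phrase it as follows. Suppose $[f] \land [1_{\R^X}] = 0$. Then $[f] \leq 0$ by the argument above (meet with a positive element being $0$ forces the element to be $\leq 0$ in any $\ell$-group? — no, this is false in general, so instead I take positive part directly). Let me instead say: since $[f] \land [1] = 0$ and $[1] \geq 0$, we get $[f]^+ \land [1] \leq [f^+] \land [1]$... the cleanest route is: set $B \coloneqq \{x \in X \mid f(x) > 0\}$; for $x \in B$ we have $f(x) \land 1 > 0$, so $B \subseteq A \in \mathcal{I}$, whence $f^+ \sim_{\mathcal{I}} 0$, i.e.\ $[f]^+ = [f^+] = 0$, so $[f] \leq 0$. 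Symmetrically $[-f] \land [1]$ need not vanish, so we cannot conclude $[f] \geq 0$; but in fact $[f] \land [1] = 0$ with $[f] \leq 0$ already gives nothing unless $f \geq 0$. Since the statement only claims $[1_{\R^X}]$ is a weak unit, and we have just shown $[f]^+ = 0$, we are \emph{not} done --- we need $[f] = 0$, not $[f] \leq 0$. The resolution: a weak unit need only satisfy $f \land 1 = 0 \Rightarrow f = 0$; but a priori $f$ could be negative. However, Definition \ref{d. weak unit} requires $1 \geq 0$ and the implication for \emph{all} $f$; and indeed for $f \leq 0$, $f \land 1 = f$, so $f \land 1 = 0$ forces $f = 0$. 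Thus combining: from $[f] \land [1] = 0$ we showed $[f]^+ = 0$, so $[f] = -[f]^- = [f] \land 0$, hence $[f] \land [1] = [f] \land [1] \land 0 = [f] \land 0 = [f]$ wait that gives $[f] = 0$ directly. So $[f] = [f] \land [1] = 0$.

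The main obstacle is essentially bookkeeping: making sure that all meets and positive/negative parts in $\frac{\R^X}{\mathcal{I}}$ are correctly computed pointwise on representatives (which is legitimate because these are finitary operations defined in the standard way, as noted in the excerpt before Remark \ref{r. surj order}), and getting the logic of the final implication right. There is no deep content --- it is a direct pointwise verification --- but one must be careful not to claim $[f] \geq 0$, which is false; the clean finish is to observe $[f] \land [1_{\R^X}] = 0$ together with $[f]^+ = 0$ forces $[f] = [f] \land 0 = [f] \land [1_{\R^X}] = 0$.
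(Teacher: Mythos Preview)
Your final argument is correct, but you have taken a considerably longer route than the paper does, and in the process you overlooked the one-line finish. The paper simply observes that for a real number $a$ one has $\min(1,a)=0$ if and only if $a=0$: indeed, if $a<0$ then $\min(1,a)=a\neq 0$, and if $a>0$ then $\min(1,a)>0$. Consequently your set $A=\{x\in X\mid (f\land 1)(x)\neq 0\}$ is \emph{equal to} $\{x\in X\mid f(x)\neq 0\}$, so $A\in\mathcal{I}$ immediately yields $[f]=0$. Your claim that $f(x)\land 1=0$ ``forces $f(x)\leq 0$'' is true but far too weak; it actually forces $f(x)=0$, and recognising this would have spared you the detour through $[f]^+$ and the final chain $[f]=[f]\land 0=[f]\land[1]=0$. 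That detour is valid, but it is solving a problem that is not there.
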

\begin{proof}
	First, $[1]\geq[0]$ since $[-]$ preserves the order. Second, let $[1]\land [f]=0$. Then $[1\land f]=0$.  Thus $\{x\in X\mid f(x)\neq 0 \}=\{x\in X\mid (1\land f)(x)\neq 0\}\in I$. Thus $[f]=0$.
\end{proof}

\begin{rem}\label{r. two ways}
	$\frac{\R^X}{\mathcal{I}}$ may be thought of as an object of $\VGu$ in two ways:
	\begin{enumerate}
		\item \label{i. first way} $\frac{\R^X}{\mathcal{I}}$ inherits from $\R^X$ the structure of $\ell$-group. By Proposition \ref{p. frac is sigma-complete}, with such structure $\frac{\R^X}{\mathcal{I}}$ is Dedekind $\sigma$-complete. By Lemma \ref{l. 1 is weak unit}, $[1_{\R^X}]$ is a weak unit for $\frac{\R^X}{\mathcal{I}}$. Thus $\frac{\R^X}{\mathcal{I}}\in\Gu$, and $F_u\left(\frac{\R^X}{\mathcal{I}}\right)\in\VGu$.
		\item Since $\R\in\VGu$, and $\sim_\mathcal{I}$ is a congruence in the language of $\VGu$, $\frac{\R^X}{\mathcal{I}}$ inherits a structure of object of $\VGu$ as quotient of a power of $\R$. We will denote this object as $\frac{F_u(\R)^X}{\mathcal{I}}$.
	\end{enumerate}
\end{rem}

\begin{lem}\label{l. quotient of power u}
	$F_u\left(\frac{\R^X}{\mathcal{I}}\right)=\frac{F_u(\R)^X}{\mathcal{I}}$, i.e.\ $F_u\left(\frac{\R^X}{\mathcal{I}}\right)$ and $\frac{F_u(\R)^X}{\mathcal{I}}$ identify the same object of $\VGu$.
\end{lem}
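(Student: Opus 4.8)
The plan is to mirror the proof of Lemma \ref{l. quotient of power} almost verbatim, extending it with the only new datum present in the unital setting, namely the interpretation of the constant symbol $1$. First I would observe that $F_u$ and the quotient construction $\tfrac{F_u(\R)^X}{\mathcal I}$ agree on the operations of $\ell$-groups for exactly the reason given in Lemma \ref{l. quotient of power}: in both cases these operations are computed pointwise on $\R^X$ and then transported through $[-]_{\mathcal I}$, e.g.\ $[f]\lor[g]\coloneqq[f\lor g]$. Since $F_u$, as an object of $\VGu$, has underlying object of $\VG$ equal to $F\bigl(\tfrac{\R^X}{\mathcal I}\bigr)$ (by definition of $F_u$), and $\tfrac{F_u(\R)^X}{\mathcal I}$ has underlying object of $\VG$ equal to $\tfrac{F(\R)^X}{\mathcal I}$ (because the forgetful reduct of a quotient is the quotient of the forgetful reduct, $\sim_{\mathcal I}$ being a congruence already in the language of $\VG$), Lemma \ref{l. quotient of power} gives at once that the two structures agree on \emph{all} the operations of $\VG$, i.e.\ on the $\ell$-group operations together with $\bigvee\limits^-$.

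It then only remains to check that the constant symbol $1$ receives the same interpretation in the two structures. In $F_u\bigl(\tfrac{\R^X}{\mathcal I}\bigr)$, by definition of $F_u$, the symbol $1$ is interpreted as the designated weak unit of $\tfrac{\R^X}{\mathcal I}\in\Gu$; by Remark \ref{r. two ways}\eqref{i. first way} (via Lemma \ref{l. 1 is weak unit}) this designated weak unit is $[1_{\R^X}]_{\mathcal I}$. On the other hand, in $\tfrac{F_u(\R)^X}{\mathcal I}$ the constant $1$ is interpreted, as for any quotient of a power, coordinatewise: it is the class $[c]_{\mathcal I}$ of the constant function $c\colon X\to\R$ whose value at each $x\in X$ is the interpretation of $1$ in $F_u(\R)$, namely $1\in\R$; hence $c=1_{\R^X}$ and the interpretation is again $[1_{\R^X}]_{\mathcal I}$. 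The two interpretations coincide, so the two structures agree on every operation of $\VGu$, which is exactly the assertion.

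I do not expect any genuine obstacle here: the statement is of the same bookkeeping nature as Lemma \ref{l. quotient of power}, and the only point requiring a word of care is the identification of the ``inherited weak unit'' of $\tfrac{\R^X}{\mathcal I}$ with the coordinatewise class of the unit of $\R$; but this is precisely what Lemma \ref{l. 1 is weak unit} and Remark \ref{r. two ways} have already pinned down. If one wished to be completely explicit one could instead argue that the identity map on the common underlying set is a morphism in both directions between the two $\VGu$-structures---it preserves the $\VG$-operations by Lemma \ref{l. quotient of power} and preserves $1$ by the computation above---and hence is an isomorphism of $\VGu$-objects; since it is moreover the identity on underlying sets, the two structures are literally equal.
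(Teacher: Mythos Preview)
Your proposal is correct and follows essentially the same route as the paper's own proof: invoke Lemma \ref{l. quotient of power} for the $\ell$-group operations and $\bigvee\limits^-$, then check separately that the constant symbol $1$ is interpreted as $[1_{\R^X}]_{\mathcal I}$ in both structures. The paper's version is terser, but the content is the same.
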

\begin{proof}
	As far as it concerns the operations of $\ell$-groups and the operation $\bigvee\limits^-$, they are defined in the same ways in the two structures, as already shown in Lemma \ref{l. quotient of power}. Moreover, in $F_u\left(\frac{\R^X}{\mathcal{I}}\right)$, the constant $1$ is defined as $1_{\frac{\R^X}{\mathcal{I}}}\coloneqq [1_{\R^X}]$, which coincides with the interpretation of the constant symbol $1$ in $\frac{F_u(\R)^X}{\mathcal{I}}$.
\end{proof}

\begin{thm}\label{t. generation vu}
	The variety $\VGu$ is generated by $\R$.
\end{thm}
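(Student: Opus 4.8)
The plan is to follow, in perfect analogy with the proof of Theorem~\ref{t. generation v}, the same three-step pattern: realize an arbitrary $G\in\VGu$ as an object of $\Gu$, embed that object via the unit-sensitive Loomis--Sikorski theorem, and transport the embedding back into $\VGu$. As in the proof of Theorem~\ref{t. generation v}, it will suffice to exhibit, for an arbitrary $G\in\VGu$, an injective morphism of $\VGu$ from $G$ into a quotient of a power of $\R$.

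\textbf{Step 1.} First I would forget down to $U_u(G)\in\Gu$ and apply Theorem~\ref{t. Nepalese for groups unit}. This supplies a set $X$, a $\sigma$-ideal $\mathcal{I}$ of subsets of $X$, and an injective $\sigma$-continuous $\ell$-morphism $\varphi\colon U_u(G)\hookrightarrow\frac{\R^X}{\mathcal{I}}$ with $\varphi(1_G)=[1_{\R^X}]$. The reason for using the unit-sensitive version here is precisely that it delivers an embedding sending the weak unit of $G$ to the canonical constant $[1_{\R^X}]$; combined with Item~\eqref{i. first way} of Remark~\ref{r. two ways} --- which views $\frac{\R^X}{\mathcal{I}}$, endowed with its $\ell$-group structure and the weak unit $[1_{\R^X}]$, as an object of $\Gu$ --- this shows that $\varphi$ is in fact a morphism in $\Gu$.

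\textbf{Step 2.} Next I would apply the functor $F_u\colon\Gu\to\VGu$ to $\varphi$. Since $F_u$ and $U_u$ are inverse functors, and $F_u$ changes neither underlying sets nor injectivity of a morphism, we obtain an injective morphism in $\VGu$
$$F_u(\varphi)\colon G = F_u(U_u(G))\hookrightarrow F_u\!\left(\frac{\R^X}{\mathcal{I}}\right).$$
By Lemma~\ref{l. quotient of power u}, the target $F_u\!\left(\frac{\R^X}{\mathcal{I}}\right)$ is the very same object of $\VGu$ as $\frac{F_u(\R)^X}{\mathcal{I}}$, that is, a quotient of a power of the algebra $F_u(\R)\in\VGu$ --- which is $\R$ equipped with its distinguished operations and the constant $1$. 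Hence $G$ embeds in $\VGu$ into a quotient of a power of $\R$, and since $G$ was arbitrary, $\R$ generates $\VGu$.

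\textbf{Main obstacle.} All the substantive content lies in Theorem~\ref{t. Nepalese for groups unit}: its delicate point is arranging the Loomis--Sikorski embedding so that the designated weak unit of $G$ is carried to $[1_{\R^X}]$, rather than to some arbitrary positive representative. Granting that theorem --- already proved above --- the present argument is a purely formal transport along the isomorphism between $\VGu$ and $\Gu$ given by $U_u$ and $F_u$, together with the identification of Lemma~\ref{l. quotient of power u}; I foresee no further difficulty.
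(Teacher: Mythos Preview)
Your proposal is correct and follows essentially the same approach as the paper's own proof: forget via $U_u$, apply Theorem~\ref{t. Nepalese for groups unit} to obtain a unit-preserving $\sigma$-continuous embedding into $\frac{\R^X}{\mathcal{I}}$, transport back via $F_u$, and invoke Lemma~\ref{l. quotient of power u} to identify the target with a quotient of a power of $\R$. The only cosmetic difference is that the paper is terser and does not separate the argument into labeled steps or add the commentary about where the substantive work lies.
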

\begin{proof}
	Let $G\in \VGu$. $U_u(G)\in\Gu$. By Theorem \ref{t. Nepalese for groups unit}  we have an injective $\sigma$-continuous $\ell$-morphism  $\varphi\colon U_u(G)\xhookrightarrow{} \frac{\R^X}{\mathcal{I}}$ such that $\varphi(1_G)=[1_{\R^X}]$. $U_u(G)\in\Gu, \frac{\R^X}{\mathcal{I}}\in\Gu$ (as shown in Item \eqref{i. first way} in Remark \ref{r. two ways}) and $\varphi$ is a morphism in $\Gu$. We can therefore apply the functor $F_u$ to $\varphi$, and we obtain an injective morphism in $\VGu$
	$$\underbrace{F_u(\varphi)}_{=\varphi}\colon \underbrace{F_u(U_u(G))}_{=G}\xhookrightarrow{} F_u\left(\frac{\R^X}{\mathcal{I}}\right)$$
	$F_u(\varphi)$ injects $G$ in $F_u\left(\frac{\R^X}{\mathcal{I}}\right)$, which, by Lemma \ref{l. quotient of power u}, is a quotient of a power of $\R$.
\end{proof}

\subsection{The varieties $\VRS$ and $\VRSu$ are generated by $\R$}

The proofs of the following two theorems are analogous to the proofs of Theorem \ref{t. generation v} and Theorem \ref{t. generation vu}.

\begin{thm}\label{t. generation r}
	The variety $\VRS$ is generated by $\R$.
\end{thm}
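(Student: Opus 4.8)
The plan is to mirror the proof of Theorem \ref{t. generation v} line for line, replacing $\ell$-groups by Riesz spaces, the functors $U$ and $F$ by $T$ and $E$, and the Loomis--Sikorski theorem for $\ell$-groups by its Riesz-space version, Theorem \ref{t. Nepalese}.

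First I would record the Riesz-space analogues of the auxiliary facts from Section \ref{S. generation}. For a set $X$ and a $\sigma$-ideal $\mathcal{I}$ of subsets of $X$, the Riesz space $\frac{\R^X}{\mathcal{I}}$ is Dedekind $\sigma$-complete: the proof of Proposition \ref{p. frac is sigma-complete} applies verbatim, since the underlying partially ordered set is unchanged and the quotient map $[-]\colon \R^X\to \frac{\R^X}{\mathcal{I}}$, being a surjective order-preserving map, is $\sigma$-continuous by Remark \ref{r. surj order}. Consequently $\frac{\R^X}{\mathcal{I}}$ can be viewed as an object of $\VRS$ in two ways: as $E\left(\frac{\R^X}{\mathcal{I}}\right)$, and --- since $\R\in\VRS$ and $\sim_\mathcal{I}$ is a congruence for the (finitary and countably infinitary) operations of $\VRS$ --- as a quotient of a power of $\R$, say $\frac{E(\R)^X}{\mathcal{I}}$. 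These two structures coincide: the computation in Lemma \ref{l. quotient of power} carries over, the only new point being that scalar multiplication is evaluated pointwise in $\R^X$ and therefore descends in the same way to both descriptions.

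The conclusion then proceeds exactly as for Theorem \ref{t. generation v}. Given $G\in\VRS$, the object $T(G)$ lies in $\RS$, so Theorem \ref{t. Nepalese} provides a set $X$, a boolean $\sigma$-ideal $\mathcal{I}$ of subsets of $X$, and an injective $\sigma$-continuous Riesz morphism $\varphi\colon T(G)\hookrightarrow \frac{\R^X}{\mathcal{I}}$; being $\sigma$-continuous, $\varphi$ is a morphism of $\RS$, so applying $E$ yields an injective morphism $E(\varphi)=\varphi\colon G=E(T(G))\hookrightarrow E\left(\frac{\R^X}{\mathcal{I}}\right)$ of $\VRS$. By the previous paragraph the codomain is a quotient of a power of $\R$, so $G$ embeds into a quotient of a power of $\R$; hence $\R$ generates $\VRS$.

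I do not expect a genuine obstacle here: this is the routine adaptation promised in the text. The only points worth an explicit sentence are that $\R$, equipped with $\bigvee\limits^-$ and the scalar multiplications, does lie in $\VRS$ (already noted in the excerpt), and that the Riesz morphism delivered by Theorem \ref{t. Nepalese} is automatically a morphism of $\RS$ by its $\sigma$-continuity, so that the functor $E$ may legitimately be applied to it.
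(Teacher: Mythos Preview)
Your proposal is correct and follows exactly the route the paper intends: the paper states that the proof of Theorem \ref{t. generation r} is analogous to that of Theorem \ref{t. generation v}, and your adaptation --- replacing $U,F$ by $T,E$, invoking Theorem \ref{t. Nepalese} directly, and noting that the Riesz-space analogue of Lemma \ref{l. quotient of power} goes through with scalar multiplication handled pointwise --- is precisely that analogy spelled out.
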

\begin{thm}
	The variety $\VRSu$ is generated by $\R$.
\end{thm}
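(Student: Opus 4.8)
The plan is to mirror the proof of Theorem \ref{t. generation vu}, replacing $\ell$-groups by Riesz spaces, the functors $U_u,F_u$ by $T_u,E_u$, and Theorem \ref{t. Nepalese for groups unit} by a weak-unit version of the Loomis-Sikorski theorem for Riesz spaces. So the first step is to establish that theorem: for every Dedekind $\sigma$-complete Riesz space $G$ with weak unit there exist a set $X$, a boolean $\sigma$-ideal $\mathcal{I}$ of subsets of $X$, and an injective $\sigma$-continuous Riesz morphism $\varphi\colon G\hookrightarrow\frac{\R^X}{\mathcal{I}}$ with $\varphi(1_G)=[1_{\R^X}]$. I would obtain it exactly as Theorem \ref{t. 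Nepalese for groups unit} is obtained, but starting from Theorem \ref{t. Nepalese} directly rather than from Theorem \ref{t. Nepalese for groups}: pick an injective $\sigma$-continuous Riesz morphism $\psi\colon G\hookrightarrow\frac{\R^Y}{\mathcal{J}}$, lift $\psi(1_G)\geq 0$ to some $u\in(\R^Y)^+$ (Lemma \ref{l. can be chosen positive} applies, since the quotient map $\R^Y\twoheadrightarrow\frac{\R^Y}{\mathcal{J}}$ is in particular a surjective $\ell$-morphism), put $X\coloneqq\{y\in Y\mid u(y)>0\}$ and $\mathcal{I}\coloneqq\{J\subseteq X\mid J\in\mathcal{J}\}$, and rescale via $m\colon\R^X\to\R^X$, $(m(f))(x)=f(x)/u(x)$.

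The only point needing attention beyond the $\ell$-group case is that every map in the construction is a Riesz morphism, not merely an $\ell$-morphism. The restriction map $-_{\mid X}$ and the quotient maps are evidently linear and commute with scalar multiplication; the rescaling $m$ does too, since $(m(\lambda f))(x)=\lambda f(x)/u(x)=\lambda\,(m(f))(x)$, and it is a lattice isomorphism because $1/u(x)>0$ for each $x\in X$. Hence $m$, and the induced map $\eta$ on $\frac{\R^X}{\mathcal{I}}$, are Riesz isomorphisms, and the universal-property and $\sigma$-continuity arguments (using Remark \ref{r. surj order}) go through unchanged. One also needs the Riesz-space analogues of Proposition \ref{p. frac is sigma-complete} and Lemma \ref{l. 1 is weak unit}: with the scalar multiplication inherited pointwise from $\R^X$, the quotient $\frac{\R^X}{\mathcal{I}}$ is a Dedekind $\sigma$-complete Riesz space in which $[1_{\R^X}]$ is a weak unit; both hold with the same proofs, since only the order structure is used.

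Granting this, fix $G\in\VRSu$. Then $T_u(G)\in\RSu$, and the theorem above gives an injective morphism $\varphi\colon T_u(G)\hookrightarrow\frac{\R^X}{\mathcal{I}}$ in $\RSu$ with $\varphi(1)=[1_{\R^X}]$, where $\frac{\R^X}{\mathcal{I}}$ carries the Riesz structure and weak unit just described. Applying $E_u$ yields an injective morphism $E_u(\varphi)=\varphi\colon E_u(T_u(G))=G\hookrightarrow E_u\left(\frac{\R^X}{\mathcal{I}}\right)$ in $\VRSu$. The Riesz-space analogue of Lemma \ref{l. quotient of power u} --- proved the same way, by checking that the Riesz-space operations, the operation $\bigvee\limits^-$, and the constant $1$ are computed identically in $E_u\left(\frac{\R^X}{\mathcal{I}}\right)$ and in the quotient $\frac{E_u(\R)^X}{\mathcal{I}}$ of a power of $\R$ --- identifies these two objects of $\VRSu$. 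Thus $G$ embeds into a quotient of a power of $\R$, and since $G$ was arbitrary, $\R$ generates $\VRSu$.

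The main obstacle --- really the only one --- is the weak-unit Loomis-Sikorski theorem for Riesz spaces; everything after it is a routine transcription of Section \ref{S. generation groups u} with $T_u,E_u$ in place of $U_u,F_u$. Even that obstacle is mild: the passage from the unitless version (Theorem \ref{t. Nepalese}) to the weak-unit version is the same rescaling argument already carried out in Theorem \ref{t. Nepalese for groups unit}, the one genuinely new ingredient being the observation that the rescaling map respects scalar multiplication.
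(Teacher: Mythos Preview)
Your proposal is correct and takes essentially the same approach as the paper, which simply declares the proof analogous to those of Theorems \ref{t. generation v} and \ref{t. generation vu}. Your write-up spells out precisely the points where the Riesz-space version differs from the $\ell$-group version---namely, that the rescaling map $m$ and the induced maps $\rho,\eta$ are Riesz morphisms and not merely $\ell$-morphisms---and correctly observes that the weak-unit Loomis-Sikorski theorem for Riesz spaces is obtained by running the argument of Theorem \ref{t. Nepalese for groups unit} starting from Theorem \ref{t. Nepalese} directly.
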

\subsection{Conclusion}
Summing Section \ref{S. generation} up, we have the following results.
\begin{thm}\label{t. HSP}
\begin{enumerate}
	\item The variety $\VG$ is generated by $\left(\R, \left\{0,+,-,\lor,\land,\bigvee\limits^-\right\}\right)$;
	\item The variety $\VGu$ is generated by $\left(\R, \left\{0,+,-,\lor,\land,\bigvee\limits^-,1\right\}\right)$;
	\item The variety $\VRS$ is generated by $\left(\R, \left\{0,+,-,\lor,\land,\bigvee\limits^-\right\}\cup\{\lambda\cdot -\mid\lambda\in\R\}\right)$;
	\item The variety $\VRSu$ is generated by 
	
	$\left(\R, \left\{0,+,-,\lor,\land,\bigvee\limits^-,1\right\}\cup\{\lambda\cdot -\mid\lambda\in\R\}\right)$.
\end{enumerate}
\end{thm}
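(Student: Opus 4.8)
The plan is to recognize that Theorem \ref{t. HSP} is a consolidated restatement of the four generation results established in the preceding subsections, together with the identification of the precise algebraic structure carried by $\R$ in each of the four languages. First I would recall from the opening of Section \ref{S. generation} the ``easy verifications'' already granted there: $\R$ equipped in turn with $\left\{0,+,-,\lor,\land,\bigvee\limits^-\right\}$, with that set plus the constant $1$, with the Riesz-space operations together with $\bigvee\limits^-$, and with all of those plus $1$, is a legitimate object of $\VG$, $\VGu$, $\VRS$, $\VRSu$ respectively. This makes the four assertions ``the variety is generated by $\R$'' unambiguous, with ``quotient of a power of $\R$'' always read in the signature of the variety at issue.

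Then item (1) is precisely Theorem \ref{t. generation v}, item (2) is precisely Theorem \ref{t. generation vu}, item (3) is precisely Theorem \ref{t. generation r}, and item (4) is precisely the last theorem of the previous subsection. For the two Riesz clauses, whose proofs were only indicated as ``analogous'', the argument runs exactly as in the $\ell$-group case: given an arbitrary $G\in\VRS$ (resp.\ $G\in\VRSu$) one applies the Loomis--Sikorski theorem for Dedekind $\sigma$-complete Riesz spaces (Theorem \ref{t. Nepalese}) --- resp.\ its weak-unit refinement, obtained verbatim as in Theorem \ref{t. Nepalese for groups unit}, with $\varphi(1_G)=[1_{\R^X}]$ --- to embed $G$ injectively and $\sigma$-continuously into some $\frac{\R^X}{\mathcal{I}}$; one upgrades $\varphi$ along the functor $E$ (resp.\ $E_u$) to a morphism of the variety; and one identifies $E\left(\frac{\R^X}{\mathcal{I}}\right)$ (resp.\ $E_u\left(\frac{\R^X}{\mathcal{I}}\right)$) as a quotient of a power of $\R$ via the Riesz-space analogues of Lemma \ref{l. quotient of power} and Lemma \ref{l. quotient of power u}. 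The only extra point over the $\ell$-group case is that the quotient map $[-]\colon\R^X\to\frac{\R^X}{\mathcal{I}}$, and the restriction and rescaling maps used in the weak-unit refinement, must be Riesz morphisms rather than merely $\ell$-morphisms; this is immediate since each acts pointwise and the scalar multiplications are pointwise as well.

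Since every ingredient has already been proved, there is no genuine obstacle here: the statement follows by simply quoting the four theorems. The single point demanding care is bookkeeping on signatures --- making sure the structure attached to $\R$ in each clause matches the operations of the corresponding variety, and in particular not omitting, in clauses (3) and (4), the full family $\{\lambda\cdot-\mid\lambda\in\R\}$ of unary scalar multiplications that is part of the generating object.
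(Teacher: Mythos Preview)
Your proposal is correct and matches the paper's own treatment: Theorem \ref{t. HSP} is presented in the paper under the heading ``Conclusion'' with the sentence ``Summing Section \ref{S. generation} up, we have the following results,'' i.e.\ it is exactly the consolidation of Theorems \ref{t. generation v}, \ref{t. generation vu}, \ref{t. generation r}, and the $\VRSu$ theorem that you identify. Your additional sketch of how the Riesz cases (3) and (4) are proved ``analogously'' is accurate and slightly more explicit than what the paper itself provides.
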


\section{The quasi-varieties $\VG$, $\VGu$, $\VRS$, $\VRSu$ are generated by $\R$}

\begin{defn}
	Given a similarity type $\mathcal{L}$, a \emph{quasi-equation with countably many premises} (in $\mathcal{L}$) is an expression of the following form.	
		\begin{align*}
	\begin{cases}
	\tau_1=\rho_1&\\
	\tau_2=\rho_2&\\
	\vdots &\\
	\end{cases}&\Rightarrow\tau=\rho
	\end{align*}	
	where $I$ is a set and  $\tau,\rho,\tau_1,\rho_1,\tau_2,\rho_2,\dots$ are terms in $\mathcal{L}$. We say that such a quasi-equation \emph{holds} in an $\mathcal{L}$-algebra $A$ if the conclusion holds in $A$ whenever each premise hold in $A$.
\end{defn}

\begin{example}
	One example of quasi-equation with countably many premises is the archimedean property.
	\begin{quote}
		If $a\geq 0$ and $na\leq b$ for every $n\geq 1$, then $a=0$.
	\end{quote}
\end{example}

\begin{lem}\label{l. property in R}
	For $a,b,a_1,b_1,a_2,b_2\dots\in\R$, if
	$$[a_1=b_1, a_2=b_2,\dots] \Rightarrow a=b$$
	then
	$$\bigvee\limits_{n,k \geq 1  }^{|a-b|}k|a_n-b_n|=|a-b|.$$
\end{lem}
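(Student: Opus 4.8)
The plan is to analyze the statement in $\R$ directly, using the fact that $\R$ is totally ordered and that $\bigvee\nolimits^{g}$ is interpreted as $\sup_{n}\{f_n\land g\}$ (here indexed by the pairs $(n,k)$). First I would distinguish two cases according to whether the hypothesis $[a_1=b_1, a_2=b_2,\dots]\Rightarrow a=b$ is vacuous or not.

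\begin{proof}
We distinguish two cases.

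\textbf{Case 1: $a=b$.} Then $|a-b|=0$, and for every $n,k\geq 1$ we have $k|a_n-b_n|\geq 0$, so $k|a_n-b_n|\land |a-b| = k|a_n-b_n|\land 0 = 0$. Hence
$$\bigvee\limits_{n,k\geq 1}^{|a-b|} k|a_n-b_n| = \sup_{n,k\geq 1}\{k|a_n-b_n|\land 0\} = 0 = |a-b|.$$

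\textbf{Case 2: $a\neq b$.} We claim that in this case the premise must fail, i.e.\ there exists $m\geq 1$ with $a_m\neq b_m$. Indeed, if $a_n=b_n$ for all $n\geq 1$, then the hypothesis "$[a_1=b_1, a_2=b_2,\dots]\Rightarrow a=b$" forces $a=b$, contradicting $a\neq b$. So fix $m$ with $|a_m-b_m|>0$. Then $\sup_{k\geq 1} k|a_m-b_m| = +\infty$ in the extended sense, and in particular for every real $c$ there is $k$ with $k|a_m-b_m| > c$; taking $c=|a-b|$ we get $k|a_m-b_m|\land |a-b| = |a-b|$ for that $k$. By Lemma~\ref{l. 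A2 more natural} (applied with $g=|a-b|$, using the index $(m,k)$ among the countably many indices $(n,k)$), we obtain
$$|a-b| = k|a_m-b_m|\land |a-b| \leq \bigvee\limits_{n,k\geq 1}^{|a-b|} k|a_n-b_n|.$$
On the other hand, by (A3) (with $h=|a-b|$, each $f_{n,k}=k|a_n-b_n|$) we have $\bigvee\nolimits_{n,k\geq 1}^{|a-b|} k|a_n-b_n| \leq |a-b|$. Combining the two inequalities yields the desired equality.
\end{proof}

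The only mild subtlety is the bookkeeping of the doubly-indexed family $(k|a_n-b_n|)_{n,k\geq 1}$ as a single countably-indexed family so that Lemma~\ref{l. A2 more natural} and (A3) apply verbatim; this is purely notational, since $\N\times\N$ is countable. I expect no real obstacle: the argument is a case split powered by the total order of $\R$ (so that $a\neq b$ implies $|a-b|>0$, making multiplication by $k$ unbounded) together with the two basic inequalities already recorded for the operation $\bigvee\nolimits^{-}$.
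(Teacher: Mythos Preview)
Your proof is correct and follows essentially the same approach as the paper's: interpret the implication as the disjunction ``$a=b$ or $\exists m\,(a_m\neq b_m)$'', dispose of the first case trivially, and in the second use that $k|a_m-b_m|$ eventually dominates $|a-b|$ to squeeze the value between the two obvious bounds. The only cosmetic point is that (A3) literally yields $\bigvee\nolimits^{|a-b|}\bigl(k|a_n-b_n|\land|a-b|\bigr)\leq|a-b|$, so you are implicitly also using (A1) (or Proposition~\ref{supinf}) to drop the inner $\land\,|a-b|$ for the upper bound.
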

\begin{proof}
	Let us suppose $[a_1=b_1, a_2=b_2, \dots] \Rightarrow a=b$. This means $a=b$ or there exists $m\geq 1 $ such that $a_m\neq b_m$. If $a=b$, then $\bigvee\limits_{n,k \geq 1  }^{|a-b|}k|a_n-b_n|=\bigvee\limits_{n,k \geq 1  }^{0}k|a_n-b_n|=0=|a-b|$. If, instead, $a_m\neq b_m$ for some $m\geq 1  $, then $|a-b|\geq \bigvee\limits_{n,k \geq 1  }^{|a-b|}k|a_n-b_n|\geq\bigvee\limits_{k \geq 1  }^{|a-b|}k|a_m-b_m|\stackrel{|a_m-b_m|>0}{=}|a-b|$.
\end{proof}

\begin{thm}\label{t. finally mf gen}
	Let $\mathcal{Z}$  be the variety generated by $(\R,\mathcal{L})$, where $0,+,-,\lor,\land,\bigvee\limits^-\in\mathcal{L}$. If a quasi-equation with countably many premises in the language $\mathcal{L}$ holds in $\R$, then it holds in every $G\in\mathcal{Z}$.
\end{thm}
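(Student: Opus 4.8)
The plan is to compress the whole (infinitary) quasi-equation into a single ordinary $\mathcal{L}$-equation that already holds in $\R$, transport that equation to $\mathcal{Z}$ for free, and then read the quasi-equation back off it. Write the given quasi-equation as $[\tau_1=\rho_1,\tau_2=\rho_2,\dots]\Rightarrow\tau=\rho$ with $\tau,\rho,\tau_1,\rho_1,\dots$ terms of $\mathcal{L}$ in a set $V$ of variables. Since $0,+,-,\lor,\land,\bigvee\limits^-\in\mathcal{L}$, the expression
$$E\colon\qquad \bigvee\limits_{n,k\geq1}^{\lvert\tau-\rho\rvert}k\lvert\tau_n-\rho_n\rvert=\lvert\tau-\rho\rvert$$
is again a genuine $\mathcal{L}$-equation in the variables $V$; here $\lvert x\rvert\coloneqq x\lor(-x)$, $kx\coloneqq x+\dots+x$ ($k$ times), and $\bigvee\limits_{n,k\geq1}$ is $\bigvee\limits^-$ precomposed with a fixed bijection $\mathbb{N}\to\mathbb{N}\times\mathbb{N}$, exactly as in Lemma \ref{l. property in R}. (If the list of premises is finite, I would first pad it with copies of $0=0$.)

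First I would verify that $E$ holds in $\R$. For an arbitrary assignment $v\colon V\to\R$, the hypothesis that the quasi-equation holds in $\R$ says exactly that $[v(\tau_1)=v(\rho_1),v(\tau_2)=v(\rho_2),\dots]\Rightarrow v(\tau)=v(\rho)$; plugging $a\coloneqq v(\tau)$, $b\coloneqq v(\rho)$, $a_n\coloneqq v(\tau_n)$, $b_n\coloneqq v(\rho_n)$ into Lemma \ref{l. property in R} yields precisely the instance of $E$ at $v$. Hence $E$ holds identically in $\R$. Now the class of $\mathcal{L}$-algebras satisfying $E$ is closed under $\mathbf{H}$, $\mathbf{S}$, $\mathbf{P}$ and contains $\R$, so it contains the variety $\mathcal{Z}$ generated by $\R$; that is, $E$ holds in every $G\in\mathcal{Z}$. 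For the same reason the $\ell$-group axioms and (A1)--(A3) hold in every such $G$, so the reasoning of Lemma \ref{l. A2 more natural} and Proposition \ref{supinf} goes through verbatim inside $G$ and gives $\bigvee\limits_i^{g}h_i=\sup_i\{h_i\land g\}$ there.

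To finish I would take any $G\in\mathcal{Z}$ and any assignment $v\colon V\to G$ satisfying all the premises, i.e.\ $v(\tau_m)=v(\rho_m)$ for every $m$. Then $v(\tau_m-\rho_m)=0$, hence $v(k\lvert\tau_m-\rho_m\rvert)=0$ for all $m,k\geq1$. Evaluating $E$ at $v$ and writing $c\coloneqq v(\lvert\tau-\rho\rvert)$, which is $\geq0$ since $\lvert x\rvert\geq0$ in any $\ell$-group, we get
$$c=\bigvee\limits_{n,k\geq1}^{c}0=\sup_{n,k}\{0\land c\}=0\land c=0.$$
Thus $v(\lvert\tau-\rho\rvert)=0$, so $v(\tau)=v(\rho)$ (because $\lvert x\rvert=0$ iff $x=0$ in an $\ell$-group), i.e.\ the conclusion of the quasi-equation holds under $v$. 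Since $v$ was arbitrary, the quasi-equation holds in $G$, which is what we want.

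The only non-formal ingredient is Lemma \ref{l. property in R}, which is already available; everything else is bookkeeping with terms and the defining properties of $\VG$. The step I expect to need the most care in the write-up is the very first reduction: one must be explicit that the premises of the quasi-equation are indexed by a countable set, that this is exactly the shape of the hypothesis of Lemma \ref{l. property in R}, and that the reindexing $\mathbb{N}\to\mathbb{N}\times\mathbb{N}$ turning $\bigvee_{n,k}$ into a single application of $\bigvee\limits^-$ is chosen once and for all (so that $E$ is literally a term equation in the signature $\mathcal{L}$, not merely a schema) and used consistently in $\R$ and in $G$.
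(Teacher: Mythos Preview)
Your proposal is correct and follows essentially the same route as the paper: encode the quasi-equation as the single $\mathcal{L}$-equation $\bigvee_{n,k\geq1}^{\lvert\tau-\rho\rvert}k\lvert\tau_n-\rho_n\rvert=\lvert\tau-\rho\rvert$ via Lemma~\ref{l. property in R}, transport it to every $G\in\mathcal{Z}$, and then evaluate under an assignment satisfying the premises to get $\lvert\tau-\rho\rvert=0$. Your write-up is in fact slightly more careful than the paper's, in that you make explicit why the $\VG$ axioms (and hence Proposition~\ref{supinf}) hold in an arbitrary $G\in\mathcal{Z}$, and you spell out the reindexing $\mathbb{N}\to\mathbb{N}\times\mathbb{N}$ needed to regard $\bigvee_{n,k}$ as a single application of $\bigvee\limits^-$.
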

\begin{proof}
	Let us suppose that the quasi-equation
	$$[\tau_1=\rho_1,\tau_2=\rho_2,\dots] \Rightarrow \tau=\rho$$
	holds in $\R$. By Lemma \ref{l. property in R}, the equation
	$$\bigvee\limits_{n,k \geq 1  }^{|\tau-\rho|}k|\tau_n-\rho_n|=|\tau-\rho|$$	
	holds in $\R$, and therefore in every $G\in\mathcal{Z}$. We now see that in such a $G$, if $\tau_n=\rho_n$ for all $n$, then
	$$|\tau-\rho|=\bigvee\limits_{n,k \geq 1  }^{|\tau-\rho|}k\lvert\tau_n-\rho_n\rvert=\bigvee\limits_{n,k \geq 1  }^{|\tau-\rho|}0=0.$$	
	In conclusion, $\tau=\rho$.
\end{proof}

\begin{cor}\label{c. generation as quasi-variety}
	If a quasi-equation with countably many premises in the language of $\VG$ (resp. $\VGu$, $	\VRS$, $\VRSu$) holds in $\R$, then it holds in every object of $\VG$ (resp. $\VGu$, $	\VRS$, $\VRSu$).
\end{cor}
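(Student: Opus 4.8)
The plan is to obtain Corollary~\ref{c. generation as quasi-variety} as an immediate application of Theorem~\ref{t. finally mf gen}, after checking that its hypotheses are met in each of the four cases. Fix one case, say $\VGu$; the remaining three are handled identically. Let $\mathcal{L}$ be the similarity type of $\VGu$, consisting of the $\ell$-group operations $0,+,-,\lor,\land$, the operation $\bigvee\limits^-$ of countably infinite arity, and the constant $1$. Since $0,+,-,\lor,\land,\bigvee\limits^-\in\mathcal{L}$, Theorem~\ref{t. finally mf gen} applies to $\mathcal{L}$ and tells us that any quasi-equation with countably many premises in the language $\mathcal{L}$ which holds in $\R$ holds in every member of the variety $\mathcal{Z}$ generated by $(\R,\mathcal{L})$.

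Next I would identify $\mathcal{Z}$ with $\VGu$. This is precisely the content of Theorem~\ref{t. HSP}(2) (equivalently Theorem~\ref{t. generation vu}): the variety $\VGu$ is generated by $(\R,\mathcal{L})$. Hence a quasi-equation with countably many premises in the language of $\VGu$ that holds in $\R$ holds in every object of $\VGu$, as desired. Replacing $\VGu$ by $\VG$, $\VRS$, $\VRSu$ and using the corresponding items of Theorem~\ref{t. HSP} yields the other three statements. For $\VRS$ and $\VRSu$ the similarity type additionally includes the scalar multiplications $\lambda\cdot-$ for $\lambda\in\R$ (and, for $\VRSu$, the constant $1$); these extra symbols are harmless, since Theorem~\ref{t. finally mf gen} is stated for an arbitrary $\mathcal{L}$ subject only to the requirement that the six distinguished operations lie in $\mathcal{L}$.

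I do not expect any real obstacle here: the substantive work --- the key observation in $\R$ recorded in Lemma~\ref{l. property in R}, together with the generation theorems of Section~\ref{S. generation} --- is already in place, so the corollary reduces to a bookkeeping matter of matching languages. The one point deserving a moment's care is that ``quasi-equation with countably many premises in the language of $\VGu$'' permits terms built from \emph{all} of $\mathcal{L}$, including $1$ (and the $\lambda\cdot-$ in the Riesz cases); since Theorem~\ref{t. finally mf gen} imposes no upper bound on $\mathcal{L}$ beyond containing the six listed operations, this causes no difficulty.
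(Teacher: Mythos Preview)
Your proposal is correct and follows exactly the paper's own proof: invoke Theorem~\ref{t. HSP} to identify each of the four varieties with the variety generated by $(\R,\mathcal{L})$, then apply Theorem~\ref{t. finally mf gen}. The paper's proof is simply the one-line version of what you wrote out in detail.
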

\begin{proof}
	By Theorem \ref{t. HSP}, the hypotheses of Theorem \ref{t. finally mf gen} are satisfied by $\VG$, $\VGu$, $\VRS$ and $\VRSu$, and the thesis follows.
\end{proof}

\begin{example}
	We illustrate Corollary \ref{c. generation as quasi-variety} with an example. Since the archimedean property is a quasi-equation with countably many premises, and since in $\R$ such property is easily seen to hold, by Corollary \ref{c. generation as quasi-variety} the well-known fact that every Dedekind $\sigma$-complete $\ell$-group is archimedean immediately follows.
\end{example}

\begin{example} We illustrate Corollary \ref{c. generation as quasi-variety} with another example. Corollary \ref{c. generation as quasi-variety} ensures that the following statement, which is a particular case of the general distributivity law in Lemma \ref{l. distriutivity a la BKW}, can be proved just by checking its validity for $G=\R$.
	\begin{quote}
		Let $G$ be a Dedekind $\sigma$-complete  $\ell$-group, and let $(x_n)_{n\geq 1  }\subseteq G$. If $\sup_{n\geq 1  } x_n$ exists, then, for every $a\in G$, $\sup_{n\geq 1  }\{a\land x_n\}$ exists and 
		$$a\land \left(\sup_{n\geq 1  } x_n\right)=\sup_{n\geq 1  } \{a\land x_n\}.$$
	\end{quote}
	Indeed, the statement is equivalent to 
	$$b=\sup_{n\geq 1  }x_n\Rightarrow a\land b=\sup_{n\geq 1  }\{a\land x_n \}$$
	i.e.\ to a conjunction of quasi-equations with countably many premises, as the next proposition shows.   
	
	\begin{prop}
		Let $G\in\VG$, $(f_n)_{n\geq 1  }\subseteq G$ and $g\in G$. Then
		$$g=\sup_{n\geq 1  } f_n\Leftrightarrow \begin{cases}
		g=\bigvee\limits_{n\geq 1  }^g f_n&\\
		f_1\land g=f_1&\\
		f_2\land g=f_2&\\
		\vdots &
		\end{cases} $$
	\end{prop}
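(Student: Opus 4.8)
The plan is to recognise that the system on the right-hand side is just a repackaging of two things: the countably many conditions ``$f_n\land g=f_n$'', each of which is the standard encoding of $f_n\leq g$ in an $\ell$-group, together with the single equation $g=\bigvee\limits_{n\geq 1}^g f_n$. Everything then reduces to Proposition \ref{supinf}, which tells us that $\bigvee\limits_{n\geq 1}^g f_n$ computes exactly $\sup_{n\geq 1}\{f_n\land g\}$. So the whole proof is really an exercise in unwinding definitions and invoking that one proposition.

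For the forward implication I would assume $g=\sup_{n\geq 1}f_n$. Then $f_n\leq g$, hence $f_n\land g=f_n$, for every $n\geq 1$, which disposes of all the premises; and from $f_n\land g=f_n$ we get $\sup_{n\geq 1}\{f_n\land g\}=\sup_{n\geq 1}f_n=g$, which by Proposition \ref{supinf} is precisely the assertion $\bigvee\limits_{n\geq 1}^g f_n=g$. For the converse I would assume $f_n\land g=f_n$ for all $n$ (i.e.\ $f_n\leq g$) and $g=\bigvee\limits_{n\geq 1}^g f_n$; applying Proposition \ref{supinf} gives $g=\sup_{n\geq 1}\{f_n\land g\}$, and since $f_n\land g=f_n$ this is $\sup_{n\geq 1}f_n$, so $g=\sup_{n\geq 1}f_n$ as required.

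I do not expect any real obstacle here: the argument is two lines once Proposition \ref{supinf} is on the table. The only point that needs a word of care is purely presentational, namely making explicit that the displayed brace abbreviates a countable conjunction of equations, one equation $f_n\land g=f_n$ for each $n\geq 1$, and that $a\land b=a$ means $a\leq b$; with that understood, the equivalence is exactly the conjunction of quasi-equations with countably many premises that the surrounding discussion needs it to be.
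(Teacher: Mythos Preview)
Your proposal is correct and essentially identical to the paper's own proof: both directions are handled exactly as you describe, by invoking Proposition \ref{supinf} to identify $\bigvee\limits_{n\geq 1}^g f_n$ with $\sup_{n\geq 1}\{f_n\land g\}$ and using $f_n\land g=f_n$ to pass between $\sup_{n\geq 1}\{f_n\land g\}$ and $\sup_{n\geq 1}f_n$.
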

	\begin{proof}
		If $g=\sup_{n\geq 1  } f_n$, then $f_n\land g=f_n$ for every $n\geq 1  $. Moreover, by Proposition \ref{supinf}, $\bigvee\limits_{n\geq 1  }^g f_n=\sup_{n\geq 1  }\{f_n\land g\}=\sup_{n\geq 1  }f_n=g$.
		
		Suppose now $f_n\land g=f_n$ for every $n\geq 1  $ and $g=\bigvee\limits_{n\geq 1  }^g f_n$. Then, by Proposition \ref{supinf},
		$g=\bigvee\limits_{n\geq 1  }^g f_n=\sup_{n\geq 1  }\{f_n\land g\}=\sup_{n\geq 1  }f_n$.
	\end{proof}
\end{example}


\end{document}